\numberwithin{equation}{section}
\newtheorem{thm}{Theorem}[section]
\newtheorem{prop}[thm]{Proposition}
\newtheorem{coro}[thm]{Corollary}
\newtheorem{remark}{Remark}[section]
\newtheorem{definition}{Definition}
\newenvironment{Eq}{\begin{equation}\begin{aligned}}{\end{aligned}\end{equation}\ignorespacesafterend}
\newenvironment{Eq*}{\begin{equation*}\begin{aligned}}{\end{aligned}\end{equation*}\ignorespacesafterend}
\DeclareMathOperator{\supp}{supp}
\newtheorem{lemma}{Lemma}
\numberwithin{equation}{section}
\numberwithin{thm}{section}
\numberwithin{lemma}{section}
\title[nonlinear wave equations with singular potential]{Global solvabilty for nonlinear wave equations with singular
potential}
\begin{document}

\author{Vladimir Georgiev}
\address{V. Georgiev
\newline
Dipartimento di Matematica, Universit\`a di Pisa
Largo B. Pontecorvo 5, 56100 Pisa, Italy,
 and
 Faculty of Science and Engineering, Waseda University
 3-4-1, Okubo, Shinjuku-ku, Tokyo 169-8555,
Japan and IMI--BAS, Acad.
Georgi Bonchev Str., Block 8, 1113 Sofia, Bulgaria
}%
\email{georgiev@dm.unipi.it}%
\thanks{ The first  author was supported in part by  INDAM, GNAMPA - Gruppo Nazionale per l'Analisi Matematica, la Probabilita e le loro Applicazioni, by Institute of Mathematics and Informatics, Bulgarian Academy of Sciences, by Top Global University Project, Waseda University and  the Project PRA 2018 49 of  University of Pisa and by the project PRIN  2020XB3EFL funded by the Italian Ministry of Universities and Research.}
\author
{Hideo Kubo}
\address{Hideo Kubo \newline
Department of Mathematics,
Faculty of Science, Hokkaido University,
Sapporo 060-0810, Japan}
\email{kubo@math.sci.hokudai.ac.jp}
\thanks{The second author was
partially supported by Grant-in-Aid for Science Research (No.16H06339 and No. 19H01795), JSPS}

\keywords{Strauss conjecture, wave equation with potential, semilinear wave equation}

\subjclass[2000]{35L71, 35B33, 35L81}

\maketitle

\begin{abstract}
  In this work we study the global existence for 3d semilinear wave equation with non-negative potential satisfying generic decay assumptions.
  In the supercritical case $p > 1 + \sqrt{2}$ we establish the small data global existence result. The approach is based on appropriate conformal energy estimate in combination with Hardy inequality for conformal energy on space - like surfaces.
\end{abstract}

\section{Introduction}

In this article we study global solvabilty for the Cauchy problem
to the power-type nonlinear wave equations with the potential:
\begin{equation}\label{pn}
\begin{aligned}
 & (\partial_t^2-\Delta+V(|x|)) u = b|u|^{p-1}u,
      \quad \mbox{for }\ (t,x) \in [0,T)\times \mathbb{R}^3,
\\
 & u(0,x)=f(x), \ (\partial_t u)(0,x)=g(x) \quad \mbox{for}\ x \in {\mathbb R}^3.
\end{aligned}
\end{equation}
Here we shall assume that the potential $V$ is of the form
\begin{equation}\label{eq.int01}
    V(r) = \frac{\chi(r)}{r^2},
\end{equation}
 where $\chi(r) \in C([0,\infty)) \cap C^1((0,\infty))$ is a non - increasing bounded function satisfying
\begin{equation}\label{eq.int01m}
   \chi(r)\geq c > \frac{3}{4}.
\end{equation}

The assumption \eqref{eq.int01m} guarantees that  the operator
$$ A = -\Delta + V,$$
can be extended as an  essentially self - adjoint operator (using the result of Simon in \cite{S73}).
It is easy to see that  $V(r) \in C^1(0,\infty)$ and  satisfies
the property
\begin{equation}\label{eq.in1}
   r V^\prime(r)+2V(r) \leq  0, \ \ \forall r>0.
\end{equation}

The potentials  satisfying \eqref{eq.in1} satisfy also
\begin{equation}\label{eq.in1a1}
    V(r) \geq \frac{c}{r^2},  \  r \in (0,\infty).
\end{equation}

In particular $V(r) = a/r^2$, i.e. inverse square potential with $a >0$ satisfies \eqref{eq.in1} with equality in the place of inequality.

In this work we shall assume $b \in {\mathbb R}$, $p>1$, and $u(t,x)$ is a real-valued unknown function.
If we use the Friedrich's extension of the operator $-\Delta$ restricted to $C_0^\infty(\mathbb{R}^3 \setminus 0),$ then it is well known  that
we have  the following property  of the domain ${\mathcal D}(\sqrt{A_F}),$  where $A_F$ is the Friedrichs extension of  $-\Delta$ restricted to $C_0^\infty(\mathbb{R}^3 \setminus 0)$
\begin{equation}\label{HI0}
     {\mathcal D}(\sqrt{A_F}) \subset H^1(\mathbb{R}^3).
\end{equation}
In fact, introducing the quadratic form
 $$
q(u,v):=(\sqrt{-\Delta}u, \sqrt{-\Delta}v)_{L^2({\mathbb R}^3)}
+(V u, v)_{L^2({\mathbb R}^3)}
$$ on $C_0^\infty(\mathbb{R}^3 \setminus 0)$  and  using the Hardy inequality
\begin{equation}\label{HI1}
  (r^{-2}u,u)_{L^2({\mathbb R}^3)} \leq 4 (-\Delta u, u)_{L^2(\mathbb{R}^3)},
\end{equation}
one can see that any Cauchy sequence $u_j$ (with respect to $q$) of functions in $C_0^\infty(\mathbb{R}^3 \setminus 0)$ is a Cauchy sequence in $H^1$ so we have  \eqref{HI0}.
$$
q(u,v):=(\sqrt{-\Delta}u, \sqrt{-\Delta}v)_{L^2({\mathbb R}^3)}
+(V u, v)_{L^2({\mathbb R}^3)}
$$
is well defined on $H^1(\mathbb{R}^3)$ and this space coincides with the closure of $C_0^\infty(\mathbb{R}^3 \setminus 0)$ functions with respect to the norm generated by $q.$

We are interestead in the existence of strong solutions for \eqref{pn}, that is,
we look for a solution of \eqref{pn} in the class
$$
u\in C^2([0,T):L^2({\mathbb R}^3))
\cap C([0,T):{\mathcal D}(A))
$$
with $T>0$.

Then we may associate \eqref{pn} with the following integral equation:
\begin{Eq}\label{IE}
u(t)=(\cos t \sqrt{A} ) f+\frac{\sin t\sqrt{A}}{\sqrt{A}} g
+\int_0^t \frac{\sin (t-\tau) \sqrt{A}}{\sqrt{A}} F(u(\tau)) d\tau
\end{Eq}
with $F(u(t))=b|u(t)|^{p-1}u(t)$.


When $V\equiv 0$, the problem \eqref{pn} has been well studied.
It was shown in  \cite{John} that if $p>1+\sqrt{2}$, then the problem admits a
unique global solution for sufficiently small initial data,
and that if $1<p<1+\sqrt{2}$, then the classical solution blows up in finite time,
even though we have small initial data. In the critical case $p=1+\sqrt{2}$,
the latter blow - up behavior occur due to \cite{schaeffer2}. The blow up result for subcritical case  in higher dimensional case is due to Sideris \cite{S84} . The critical case is studied in \cite{YZ06} , where blow up result is obtained (see \cite{Yi07}, \cite{KTW12}, \cite{LY14}).  The global existence for supercritical case in the case of dimension $n \geq 3$ is obtained in \cite{GLS97} (see also \cite{K96}, \cite{KK98},\cite{T01}, \cite{G05} ).

On the other hand, the case of $V \not\equiv  0$ is less studied.
When $V$ is non - negative and satisfies $|V(x)| \lesssim (1+|x|)^{-2-\delta}$
for large values of $x$ with some $\delta>0$, the blow-up result was obtained
by \cite{YZ05} provided $1<p<1+\sqrt{2}$ (see also \cite{ST97} for the case of negative potential). In the case of smooth compactly supported potential the supercritical case was studied in \cite{GHK01}, for the case of sign - changing potential related results can be found in \cite{K06}.
In \cite{Burq2003}
it was shown among other things that if $p\ge 3$, $V(x)=a|x|^{-2},$  $a>-1/4$,
then the small amplitude solution exists globally in time (see also \cite{Plan2003}).
Recently, this result is extended by \cite{Dai} to the case $p>1+\sqrt{2}$,
provided that the solution has rotational invariance (see also \cite{Miao2013}).
We note that in these works the higher space dimensional case are also handled.

 The global existence  result is complementary to the blow - up and life - span results obtained in \cite{LLTW}.
In order to reach the critical exponent $1+\sqrt{2}$, we need to obtain suitable decay away from the light cone. However, we have no good representation formula, because we deal with the potential which is not exactly the inverse-square potential.
For this reason, we make use of the framework given by Lai \cite{Lai}.


We denote $B(k):=\{x \in {\mathbb R}^3|\, |x| \le k\}$.

\begin{thm} \label{main}
Assume $(f,g)\in {\mathcal D}(A)\times {\mathcal D}(A^{\frac12})$ satisfy
$\supp f$, $\supp g\subset B(1)$.
Suppose that $V$ is a non-negative function that satisfies \eqref{eq.in1}
and \eqref{eq.in1a1}.
Set
\begin{Eq}\label{eta}
\eta:=
\sum_{j=0}^2
\|A^{\frac{2-j}2} f\|_{L^2({\mathbb R}^3)}
+\sum_{j=1}^2
\|A^{\frac{2-j}2} g\|_{L^2({\mathbb R}^3)}.
\end{Eq}
If $1+\sqrt{2}<p<3$, then there exists $\eta_0>0$
such that for any $\eta\in (0,\eta_0)$
there exists a unique strong solution of \eqref{pn} in $[0,\infty)$ 
satisfing $\supp u(t,\cdot) \subset B(1+t)$ and
\begin{Eq*}
 \sum_{j=0}^2
\|A^{\frac{2-j}2}u(t)\|_{L^2({\mathbb R}^3)}
+\sum_{j=1}^2
\|A^{\frac{2-j}2} \partial_t u(t)\|_{L^2({\mathbb R}^3)}
\lesssim \eta
\end{Eq*}
for $t \in [0,\infty)$.
\end{thm}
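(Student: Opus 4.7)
The plan is to solve \eqref{IE} by Banach contraction on a complete metric space $X$ whose norm combines the two-derivative energy quantity appearing in the conclusion with a weighted pointwise control of $u$. Concretely, one takes
\begin{equation*}
\|u\|_X:=\sup_{t\ge 0}\Bigl(\sum_{j=0}^{2}\|A^{(2-j)/2}u(t)\|_{L^2}+\sum_{j=1}^{2}\|A^{(2-j)/2}\partial_t u(t)\|_{L^2}\Bigr)+\sup_{t,x}w(t,|x|)|u(t,x)|,
\end{equation*}
with a weight of the form $w(t,r)=(1+t+r)(1+|t-r|)^{\mu}$, the exponent $\mu=\mu(p)>0$ tuned so that $p>1+\sqrt 2$ is precisely the integrability threshold needed below. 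Finite propagation $\supp u(t,\cdot)\subset B(1+t)$ follows from \eqref{IE} by the functional calculus together with the compact support of $(f,g)$.

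The heart of the argument is a conformal/Morawetz-type identity for $\partial_t^2+A$. Multiplying $\partial_t^2 u+Au=F(u)$ by the conformal multiplier $(t^2+r^2)\partial_t u+2tr\,\partial_r u+2tu$ and integrating by parts over a space-time slab, the potential contribution reduces, after the $r$-integration by parts, to a bulk term proportional to
\begin{equation*}
-\int t\,\bigl(rV'(r)+2V(r)\bigr)\,u^2\,dx\,dt,
\end{equation*}
which is nonnegative by \eqref{eq.in1} and so may be discarded. What remains is a uniform-in-$t$ conformal energy bound. Combined with a Hardy-type inequality for $\sqrt A$ on each time slice (which follows from \eqref{HI1}, the pointwise lower bound \eqref{eq.in1a1}, the inclusion \eqref{HI0}, and operator monotonicity of $x\mapsto\sqrt x$), this yields a weighted $L^2$ control of the form $\int(1+|t-r|)^{-2}u^2\,dx\lesssim\eta^2$. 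Following Lai's framework, one promotes this, via a radial weighted Sobolev embedding, to the pointwise estimate
\begin{equation*}
w(t,|x|)|u(t,x)|\lesssim\eta+\int_0^t\|F(u(\tau,\cdot))\|_{Y}\,d\tau
\end{equation*}
for a suitable weighted $L^2$ space $Y$.

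For the nonlinear closure one exploits $\|F(u)\|_Y\lesssim\|wu\|_{L^\infty}^{p-1}\|u\|_E$, where $\|\cdot\|_E$ denotes the energy part of $\|u\|_X$; one power of $u$ is absorbed into an $A^{1/2}$-norm via the Hardy inequality \eqref{HI0}. Plugging back, the pointwise bound closes provided $\int_0^\infty(1+\tau)^{1-(p-1)\mu}\,d\tau<\infty$ for the optimal choice $\mu=(p-1)/2$, which amounts to $(p-1)^2>2$, i.e.\ $p>1+\sqrt 2$. The upper bound $p<3$ is dictated by the estimate of $\|AF(u)\|_{L^2}$: commuting $A$ past $|u|^{p-1}u$ generates terms of the shape $|u|^{p-2}|\nabla u|^2$ and $|u|^{p-1}|\Delta u|$, which Hölder and the 3-d Sobolev embedding $H^1\hookrightarrow L^6$ absorb into a power of $\|u\|_X$ exactly when $p-1<2$. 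A standard contraction in a ball of radius $C\eta$ in $X$ then delivers the unique fixed point and the claimed a priori bound.

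The main technical obstacle is the conformal-energy estimate and the adapted Hardy inequality in the presence of the singular potential. The pointwise assumption \eqref{eq.in1} delivers the sign, but one must still control the commutators between fractional powers $A^{s/2}$ and the conformal multiplier (which produce terms involving $V$ and $rV'$), and verify that the $A$-based Hardy and Sobolev inequalities remain quantitatively strong enough to yield pointwise decay with a usable constant. This is exactly where the structural assumption $\chi\ge c>3/4$, lying strictly above the Hardy threshold $1/4$, does its decisive work, ensuring that the spectral and functional-analytic tools built on $A_F$ mimic those for $-\Delta$ closely enough to carry through the Strauss-type bootstrap.
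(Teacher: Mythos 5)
There is a genuine gap at the heart of your scheme: the pointwise decay estimate you contract with is not available. You posit a norm containing $\sup_{t,x} w(t,|x|)|u(t,x)|$ with $w=(1+t+r)(1+|t-r|)^{\mu}$ and claim this follows from the conformal/Hardy $L^2$ bounds ``via a radial weighted Sobolev embedding.'' But what a weighted $L^2$ bound of conformal type actually yields (this is Proposition \ref{Lai} of the paper) is an $L^\infty_r L^2(S^{2})$ trace bound with weight $r^{1/2}(2+t+r)^{1/2}(2+t-r)^{(s-1)/2}$, where moreover $s<2$ is forced if one wants to put the source term $F(u)$ into a usable weighted space-time norm; so the decay in $t-r$ is limited to $\mu<1/2$, the $(1+t+r)^{-1}$ decay degenerates as $r^{-1/2}(1+t+r)^{-1/2}$ away from the cone, and one only controls $L^2$ (or, after commuting the rotations $R_j$ through $A$, $L^\kappa$ with $\kappa<\infty$) on spheres, never $L^\infty_\omega$ — note your space $X$ contains no angular derivatives at all. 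The full pointwise weight $(1+t+r)(1+|t-r|)^{(p-1)/2}$ is exactly what the explicit representation formula gives in John's argument for $V\equiv 0$, and the paper stresses that no such formula exists for these potentials; that obstruction is precisely why the proof runs through Lai's $L^2$-based framework (the fractional multiplier $\tau_+^{s}\nabla_+ +\tau_-^{s}\nabla_-$, $1<s<2$, on the spherical-harmonics-reduced $1$D equation, the estimate \eqref{CE}, its interpolated mixed-norm version \eqref{BE}, and the nonlinear estimate \eqref{diff2} in $L^2_tL^2_x$ with weight $\tau_+^{s/2}\tau_-^{(1+\delta)/2}$ and $s=1+2/p$), rather than through an $L^1_t$ Duhamel bound on a pointwise norm as in your closure. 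Consistently with this, your closure arithmetic does not work even on its own terms: $\int_0^\infty(1+\tau)^{1-(p-1)\mu}d\tau<\infty$ with $\mu=(p-1)/2$ requires $(p-1)^2>4$, i.e.\ $p>3$, not $p>1+\sqrt2$; in the paper the Strauss threshold enters through $p^2-2p-1>0$ guaranteeing the time-integrability of $(2+t)^{3+2/p-2p+\delta}$ in the proof of \eqref{diff2}.

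Two secondary points. First, at the $\mathcal D(A)$ regularity level the Duhamel formula only requires $A^{1/2}F(u)$ in $L^2$, not $AF(u)$: so the constraint $p<3$ does not come from $|u|^{p-2}|\nabla u|^2$-type terms but from the Gagliardo--Nirenberg exponent $\gamma=3(p-1)/(2p)<1$ and from the $r$-integral $J(t)$ in Lemma 5.2; also, converting $\|\nabla\varphi\|_{L^{2p}}$ into $A$-norms requires the domain characterization $\|\Delta u\|_{L^2}\lesssim\|A_F u\|_{L^2}$ (estimate \eqref{upper}, Lemma \ref{l.daf1}), which uses $\chi\ge c>3/4$ and which your sketch never invokes, although without it the $H^2$-level Sobolev embeddings behind your nonlinear estimates are not justified for the operator $A$. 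Second, your use of the classical conformal multiplier $(t^2+r^2)\partial_t+2tr\partial_r+2t$ does give a good-signed bulk term via \eqref{eq.in1}, but with the full weight $s=2$ the inhomogeneous side cannot be closed near the critical exponent; the fractional weight $s=1+2/p<2$ is not a convenience but the mechanism that makes the bookkeeping at $p>1+\sqrt2$ possible. The overall architecture (local existence with the blow-up criterion of Theorem \ref{local} plus an a priori bound on a weighted mixed norm, versus your single global contraction) is a matter of taste, but the missing pointwise-decay machinery and the closure exponent are substantive defects, not presentational ones.
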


In the section 2, we prepare a couple of inequalities which are valid for general functions.
Those estimates are simple generalization of \cite{Lai} for any dimensions larger than 3.
We give a new characterization of the domain of the Friedrics extension, especially \eqref{upper} in the section 3. This estimate is crucial in proving the local existence result (see Theorem 5.1 below). In the section 4, we derive a weighted $L^2$-estimate for strong solutions to the wave equation with the singular potential by using the same multiplier as in \cite{Lai}.
In the section 5, we give a proof of Theorem 1.1, by showing a blowup criterion and a suitable apriori estimate.

\section{Preliminaries}

First of all, we prepare the following estimate, in order to prove Proposition 2.2.

\begin{lemma} \label{L2L2}
Let $n \ge 2$, $s\not=1$ 
and $k >0$.
If $\phi \in C^1({\mathbb R}^n)$ satisfies $\supp \phi \subset B(k)$, then we have
\begin{Eq*}
& \|(1+k-r)^{(s-2)/2} \phi\|_{L^2 ({\mathbb R}^n) }
\\
\lesssim \, & \|(1+k-r)^{s/2} \partial_r \phi\|_{L^2 ({\mathbb R}^n) }
+\|(1+k-r)^{s/2} \frac{\phi}{r}\|_{L^2 ({\mathbb R}^n) },
\end{Eq*}
as long as the right hand side is finite.
Here the implicit constant is independent of $k$.
\end{lemma}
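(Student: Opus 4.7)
The plan is a weighted one‑dimensional Hardy‑type argument carried out slice by slice in the radial variable, then integrated over angles. Write $\|(1+k-r)^{(s-2)/2}\phi\|_{L^2(\mathbb{R}^n)}^2$ in polar coordinates as $\int_{S^{n-1}}I(\omega)\,d\sigma(\omega)$, where for each fixed $\omega\in S^{n-1}$
\[
I(\omega):=\int_0^\infty (1+k-r)^{s-2}|\phi(r\omega)|^2 r^{n-1}\,dr.
\]
Since $\supp\phi\subset B(k)$ and $\phi$ is continuous, $\phi(r\omega)=0$ for $r\ge k$, so the $r$‑integral is effectively over $(0,k)$.

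The central step is integration by parts using the identity $\partial_r\!\left[-(s-1)^{-1}(1+k-r)^{s-1}\right]=(1+k-r)^{s-2}$, which is exactly where the hypothesis $s\neq 1$ enters. Applying this to $I(\omega)$ against $|\phi|^2 r^{n-1}$, the boundary term at $r=k$ vanishes because $\phi(k\omega)=0$, and the boundary term at $r=0$ vanishes because $r^{n-1}$ does (here $n\geq 2$ is used). This yields
\[
(s-1)\,I(\omega)=\int_0^k (1+k-r)^{s-1}\bigl[\,2\operatorname{Re}(\bar\phi\,\partial_r\phi)\,r^{n-1}+(n-1)|\phi|^2 r^{n-2}\bigr]\,dr.
\]

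Next I would Cauchy–Schwarz each of the two pieces on the right, splitting the weight $(1+k-r)^{s-1}=(1+k-r)^{(s-2)/2}\cdot(1+k-r)^{s/2}$ so as to reproduce the LHS $I(\omega)^{1/2}$ together with one of the two RHS norms. For the first piece the natural factorization is
\[
(1+k-r)^{s-1}r^{n-1}=\bigl[(1+k-r)^{(s-2)/2}r^{(n-1)/2}\bigr]\bigl[(1+k-r)^{s/2}r^{(n-1)/2}\bigr],
\]
pairing $\phi$ with the first bracket and $\partial_r\phi$ with the second. For the second piece I would split $r^{n-2}=r^{(n-1)/2}\cdot r^{(n-3)/2}$ and write
\[
(1+k-r)^{s-1}r^{n-2}=\bigl[(1+k-r)^{(s-2)/2}r^{(n-1)/2}\bigr]\bigl[(1+k-r)^{s/2}r^{(n-3)/2}\bigr],
\]
so one factor of $\phi$ accompanies the first bracket and $\phi/r$ accompanies the second. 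The resulting Cauchy–Schwarz gives
\[
|s-1|\,I(\omega)\leq 2\,I(\omega)^{1/2}J_1(\omega)^{1/2}+(n-1)\,I(\omega)^{1/2}J_2(\omega)^{1/2},
\]
where $J_1,J_2$ are the radial analogues of the two terms on the RHS of the lemma. Divide by $I(\omega)^{1/2}$, square, integrate in $\omega$, and apply $(a+b)^2\leq 2a^2+2b^2$ to conclude.

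There is no serious obstacle; the main care needed is in choosing the factorization above so that Cauchy–Schwarz produces exactly the norms $\|(1+k-r)^{s/2}\partial_r\phi\|_{L^2}$ and $\|(1+k-r)^{s/2}\phi/r\|_{L^2}$, and in verifying that the two boundary contributions in the integration by parts really vanish (support condition at $r=k$, dimensional factor $r^{n-1}$ at $r=0$). The dependence on $k$ is automatically absent because after integration by parts every occurrence of $k$ appears only inside the weight $(1+k-r)$, which is present with the same exponent on both sides of the final inequality.
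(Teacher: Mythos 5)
Your argument is correct and is essentially the paper's proof: the same integration by parts using $(1+k-r)^{s-2}=-\tfrac{1}{s-1}\partial_r(1+k-r)^{s-1}$ against $|\phi|^2 r^{n-1}$ (with vanishing boundary terms), followed by Cauchy--Schwarz with the weight split $(1+k-r)^{s-1}=(1+k-r)^{(s-2)/2}(1+k-r)^{s/2}$ and absorption of the left-hand factor. The only difference is that you perform the estimate slice-by-slice in $\omega$ before integrating, whereas the paper applies Cauchy--Schwarz globally and divides by the full norm; this is a cosmetic variation, not a different method.
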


\begin{proof}
It follows that
\begin{Eq*}
& \|(1+k-r)^{(s-2)/2} \phi\|_{L^2 ({\mathbb R}^n) }^2
\\
=\, &
\frac1{s-1} \int_{S^{n-1}} dS_\omega \int_0^k
(1+k-r)^{s-1} \partial_r ( (\phi (r\omega))^2 r^{n-1} ) dr
\\
\le \, &
\frac{2}{s-1}
\|(1+k-r)^{s/2} \partial_r \phi\|_{L^2 ({\mathbb R}^n) }
\|(1+k-r)^{(s-2)/2} \phi\|_{L^2 ({\mathbb R}^n) }
\\
& \
+\frac{n-1}{s-1}
\|(1+k-r)^{s/2} \frac{\phi}{r}\|_{L^2 ({\mathbb R}^n) }
\|(1+k-r)^{(s-2)/2} \phi\|_{L^2 ({\mathbb R}^n) },
\end{Eq*}
which leads to the conclusion.
\end{proof}

\begin{prop} \label{Lai}
Let $n \ge 2$, $s\not=1$ 
and $k \ge 1$.
If $\phi \in C^1({\mathbb R}^n)$ satisfies $\supp \phi \subset B(k)$, then we have
\begin{Eq*}
& \|r^{(n-2)/2} (1+k+r)^{1/2}(1+k-r)^{(s-1)/2} \phi(r\omega)\|_{L^\infty_r L^2(S^{n-1})}
\\
\lesssim \, & \|(1+k-r)^{s/2} \partial_r \phi\|_{L^2 ({\mathbb R}^n) }
+\|(1+k-r)^{s/2} \, \frac{\phi}{r}\|_{L^2 ({\mathbb R}^n) }
\end{Eq*}
Here the implicit constant is independent of $k$.
\end{prop}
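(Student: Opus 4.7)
The crux is the elementary identity $1+k+r=(1+k-r)+2r$, which gives the weight decomposition
\begin{equation*}
r^{n-2}(1+k+r)(1+k-r)^{s-1}=r^{n-2}(1+k-r)^{s}+2\,r^{n-1}(1+k-r)^{s-1}.
\end{equation*}
Squaring the target norm and integrating over $S^{n-1}$, it therefore suffices to show that
\begin{equation*}
K(r):=r^{n-2}(1+k-r)^s\!\!\int_{S^{n-1}}\!\!|\phi(r\omega)|^2\, d\omega,\ \ L(r):=r^{n-1}(1+k-r)^{s-1}\!\!\int_{S^{n-1}}\!\!|\phi(r\omega)|^2\, d\omega
\end{equation*}
are each $\lesssim(A+B)^2$ uniformly in $r\in(0,k]$, where $A:=\|(1+k-r)^{s/2}\partial_r\phi\|_{L^2(\mathbb{R}^n)}$ and $B:=\|(1+k-r)^{s/2}\phi/r\|_{L^2(\mathbb{R}^n)}$.

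Since $\phi\in C^1$ and $\supp\phi\subset B(k)$, continuity forces $\phi(k\omega)\equiv 0$; hence $K(k)=L(k)=0$ and one has $K(r)=-\int_r^k\partial_\rho[\rho^{n-2}(1+k-\rho)^s|\phi(\rho\omega)|^2]\,d\rho$, with a parallel formula for $L$. Differentiating for $K$ produces three contributions: a non-negative term of weight $\rho^{n-3}(1+k-\rho)^s|\phi|^2$ whose sign works in our favor and is discarded; a term of weight $\rho^{n-2}(1+k-\rho)^{s-1}|\phi|^2$ (carrying factor $-s$); and the cross term $2\rho^{n-2}(1+k-\rho)^s\phi\partial_\rho\phi$. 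The corresponding weights arising from $L$ are $\rho^{n-2}(1+k-\rho)^{s-1}|\phi|^2$, $\rho^{n-1}(1+k-\rho)^{s-2}|\phi|^2$, and $\rho^{n-1}(1+k-\rho)^{s-1}\phi\partial_\rho\phi$.

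Each surviving term is then estimated by Cauchy--Schwarz, with the weight split chosen so as to produce precisely one of the norms $A$, $B$, or $\|(1+k-\rho)^{(s-2)/2}\phi\|_{L^2(\mathbb{R}^n)}$. For the $|\phi|^2$ term in $K$, the relevant split is
\begin{equation*}
\rho^{n-2}(1+k-\rho)^{s-1}=\bigl(\rho^{n-1}(1+k-\rho)^{s-2}\bigr)^{1/2}\bigl(\rho^{n-3}(1+k-\rho)^s\bigr)^{1/2},
\end{equation*}
yielding the product $\|(1+k-\rho)^{(s-2)/2}\phi\|_{L^2}\cdot B$ after using $\rho^{n-3}d\rho\,d\omega=|x|^{-2}dx$ in $\mathbb{R}^n$; the cross term in $K$ splits analogously into $B\cdot A$. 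For $L$, the $|\phi|^2$ term integrates directly to $\|(1+k-\rho)^{(s-2)/2}\phi\|_{L^2}^2$, and its cross term yields $\|(1+k-\rho)^{(s-2)/2}\phi\|_{L^2}\cdot A$. Applying Lemma~\ref{L2L2} to bound $\|(1+k-\rho)^{(s-2)/2}\phi\|_{L^2}$ by $A+B$ then closes all estimates.

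The main obstacle is identifying the precise Cauchy--Schwarz weight splittings that match exactly $A$, $B$, or the Lemma~\ref{L2L2} quantity; the exponent arithmetic is tight, and any other split would leave an expression not controlled by $A+B$. The restriction $s\ne 1$ is inherited through Lemma~\ref{L2L2} and matches the hypothesis. The plan works uniformly for $s>1$ and $s<1$ because the vanishing of $\phi$ at $r=k$ kills boundary contributions in either regime.
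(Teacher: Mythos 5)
Your proof is correct and is essentially the paper's argument: the paper splits into the regimes $2r\le 1+k$ and $2r\ge 1+k$, where it bounds exactly your quantities $K(r)$ and $L(r)$ respectively, using the same integration from $r$ to $k$ (with $\phi(k\omega)=0$), the same discarded sign-favorable term, the same Cauchy--Schwarz weight splittings, and the same appeal to Lemma~\ref{L2L2}. Your identity $1+k+r=(1+k-r)+2r$ simply replaces that case distinction by an exact decomposition, a tidy but inessential reorganization.
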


\begin{proof}
Let $x \in {\mathbb R}^n$.
First, suppose $0 \le 2r \le 1+k$, so that $1+k+r$ is equivalent to
$1+k-r$. Then, it suffices to show
\begin{Eq*}
& r^{n-2}(1+k-r)^{s} \|\phi(r\omega)\|_{L^2(S^{n-1})}^2
\\
\lesssim \, & \|(1+k-r)^{s/2} \partial_r \phi\|_{L^2 ({\mathbb R}^n) }^2
+\|(1+k-r)^{s/2} \, \frac{\phi}{r}\|_{L^2 ({\mathbb R}^n) }^2.
\end{Eq*}
Rewriting the left hand side, we see that it equals to
\begin{Eq*}
& -\int_r^k \partial_\lambda
(\lambda^{n-2} (1+k-\lambda)^{s} \|\phi(\lambda \omega)\|_{L^2(S^{n-1})}^2)
d\lambda
\\
\le \, &
s \int_0^\infty \lambda^{n-2} (1+k-\lambda)^{s-1} \|\phi(\lambda \omega)\|_{L^2(S^{n-1})}^2
d\lambda
\\
& \
+2 \int_0^\infty \lambda^{n-2} (1+k-\lambda)^{s}
\left( \int_{S^{n-1}} \phi(\lambda \omega)
  \partial_r \phi(\lambda \omega) dS_{\omega}
  \right)
d\lambda
\\
\lesssim \, &
\|(1+k-r)^{s/2} \, \frac{\phi}{r}\|_{L^2 ({\mathbb R}^n) }
\|(1+k-r)^{(s-2)/2} \phi\|_{L^2 ({\mathbb R}^n) }
\\
& \
+\|(1+k-r)^{s/2} \, \frac{\phi}{r}\|_{L^2 ({\mathbb R}^n) }
\|(1+k-r)^{s/2} \partial_r \phi\|_{L^2 ({\mathbb R}^n) }.
\end{Eq*}
Using Lemma \ref{L2L2} in order to bound the first term, we obtain the desired estimate.

Next, suppose $1+k \le 2r $, so that $1+k+r$ is equivalent to
$r$.
Similarly to the previous case, we have
\begin{Eq*}
& r^{n-1}(1+k-r)^{s-1} \|\phi(r\omega)\|_{L^2(S^{n-1})}^2
\\
=\, &
 -\int_r^k \partial_\lambda
(\lambda^{n-1} (1+k-\lambda)^{s-1} \|\phi(\lambda \omega)\|_{L^2(S^{n-1})}^2)
d\lambda
\\
\le \, &
(s-1) \|(1+k-r)^{(s-2)/2} \phi\|_{L^2 ({\mathbb R}^n) }^2
\\
& \
+2 \|(1+k-r)^{s/2} \partial_r \phi\|_{L^2 ({\mathbb R}^n) }
\|(1+k-r)^{(s-2)/2} \phi\|_{L^2 ({\mathbb R}^n) },
\end{Eq*}
which yields the desire estimate.
\end{proof}

Next we introduce the following analogous estimate to the Hardy inequality.

 \begin{lemma}
Let $s\ge 0$ and $k \ge 1$.
If $\phi \in C^1({\mathbb R}^3)$ satisfies $\supp \phi \subset B(k)$, then we have
\begin{eqnarray} \label{hardy1}
& \quad \|(1+k-r)^{s/2} \, \displaystyle\frac{\phi}{r} \|_{L^2 ({\mathbb R}^3)}
\le 2
\|(1+k-r)^{s/2} (\partial_r \phi + \displaystyle\frac{\phi}{r})\|_{L^2 ({\mathbb R}^3)},
\\  \label{hardy2}
& \quad \|(1+k-r)^{s/2} \, \partial_r \phi \|_{L^2 ({\mathbb R}^3)}
\le
\|(1+k-r)^{s/2} (\partial_r \phi + \displaystyle\frac{\phi}{r})\|_{L^2 ({\mathbb R}^3)}.
\end{eqnarray}
\end{lemma}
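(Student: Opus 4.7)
The plan is to reduce both estimates to a one-dimensional weighted Hardy-type inequality via the substitution $\psi(r,\omega):=r\phi(r\omega)$. In three dimensions one has the identity $\partial_r\phi+\phi/r=r^{-1}\partial_r\psi$, and the radial volume element $r^2\,drd\omega$ exactly cancels the $r^{-2}$ produced by the substitution, so that the three squared weighted $L^2(\mathbb{R}^3)$-norms appearing in \eqref{hardy1}--\eqref{hardy2} translate into
\[
\int(1+k-r)^s\tfrac{\psi^2}{r^2}\,drd\omega,\quad \int(1+k-r)^s(\partial_r\psi)^2\,drd\omega,\quad \int(1+k-r)^s\bigl(\partial_r\psi-\tfrac{\psi}{r}\bigr)^{\!2}\,drd\omega,
\]
respectively. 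This is the crucial first reduction.

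Next I would derive, by writing $\psi\partial_r\psi/r=\tfrac{1}{2r}\partial_r(\psi^2)$ and integrating by parts once in $r$ against the weight $(1+k-r)^s/r$, the one-dimensional identity
\[
\int_0^k(1+k-r)^s\frac{\psi^2}{r^2}\,dr=2\int_0^k(1+k-r)^s\,\partial_r\psi\cdot\frac{\psi}{r}\,dr-s\int_0^k(1+k-r)^{s-1}\frac{\psi^2}{r}\,dr.
\]
The main technical point here is confirming that the boundary contributions vanish: at $r=0$ this follows from $\psi=r\phi=O(r)$, while at $r=k$ it follows from $\phi\in C^1$ with $\supp\phi\subset B(k)$, which by continuity forces $\phi(k\omega)=0$.

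With this identity in hand, both estimates should drop out using $s\ge 0$. For \eqref{hardy2} I would expand the square $(\partial_r\psi-\psi/r)^2$ in the third norm above and substitute the identity, producing
\[
\|(1+k-r)^{s/2}\partial_r\phi\|_{L^2}^2=\|(1+k-r)^{s/2}(\partial_r\phi+\phi/r)\|_{L^2}^2-s\int(1+k-r)^{s-1}\tfrac{\psi^2}{r}\,drd\omega,
\]
and the correction term is non-positive. For \eqref{hardy1} I would discard the non-positive $-s$ term in the identity and apply Cauchy--Schwarz to the cross integral of $\partial_r\psi\cdot\psi/r$; after dividing through by $\|(1+k-r)^{s/2}\psi/r\|_{L^2}=\|(1+k-r)^{s/2}\phi/r\|_{L^2}$, the factor $2$ emerges. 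I expect no real obstacle beyond the boundary-term verification already noted; the sign condition $s\ge 0$ is built in, and the rest is bookkeeping.
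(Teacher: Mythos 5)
Your argument is correct and is essentially the paper's proof in disguise: the substitution $\psi=r\phi$ merely repackages the paper's direct computation with $\phi$, since your one-dimensional identity is exactly what the paper obtains by integrating $\partial_r\bigl((1+k-r)^s\phi^2 r\bigr)$ for \eqref{hardy1} and by expanding $(\partial_r\phi+\phi/r)^2r^2$ and integrating by parts for \eqref{hardy2}, with the same use of $s\ge 0$ to discard the sign-definite term and the same Cauchy--Schwarz step producing the factor $2$. Your boundary-term verification at $r=0$ and $r=k$ is also the right (and only) point needing care, so no changes are required.
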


\begin{proof}
First we prove \eqref{hardy1}.
It follows that
\begin{Eq*}
& \int_0^\infty(1+k-r)^s\phi^2dr
\\
=& s\int_0^\infty(1+k-r)^{s-1}\phi^2rdr
-2\int_0^\infty(1+k-r)^s\phi\, \partial_r\phi \, rdr
\\
\ge &
-2\int_0^\infty(1+k-r)^s \phi\,
(\partial_r \phi + \displaystyle\frac{\phi}{r})\, rdr
+2\int_0^\infty(1+k-r)^s\phi^2dr,
\end{Eq*}
since $s\ge 0$.
Therefore, we have
\begin{Eq*}
\int_0^\infty\!\!\int_{s^2}(1+k-r)^s\phi^2dS_\omega dr
\le 2\int_0^\infty\!\!\int_{S^2}
(1+k-r)^s \phi\,
(\partial_r \phi + \displaystyle\frac{\phi}{r})\, r dS_\omega dr,
\end{Eq*}
which leads to \eqref{hardy1} by the Schwarz inequality.

Next we prove \eqref{hardy2}
It follows that
\begin{Eq*}
& \int_0^\infty \!\! \int_{S^2}
(1+k-r)^s  (\partial_r \phi +\frac{\phi }{r})^2 r^2
 dS_\omega dr
 \\
 = & \,
\int_0^\infty \!\! \int_{S^2}
(1+k-r)^s  (\partial_r \phi)^2 r^2
 dS_\omega dr
 \\
 &+\int_0^\infty \!\! \int_{S^2}
(1+k-r)^s  \partial_r (\phi^2) \, r
 dS_\omega dr
 +\int_0^\infty\!\!\int_{S^2}(1+k-r)^s\phi^2dS_\omega dr
 \\
 = & \, \int_0^\infty \!\! \int_{S^2}
(1+k-r)^s  (\partial_r \phi )^2 r^2
 dS_\omega dr
\\ & \,
+ s \!\int_{S^2}\left(  \int_0^\infty
(1+k-r)^{s-1} \phi ^2 r dr \right)
 dS_\omega,
\end{Eq*}
which implies the desired estimate, because $s\ge 0$.
\end{proof}

\section{Representation of the domain of the Friedrichs extension}

Our starting point is to introduce the operator
$A_0 = -\Delta + \frac{\chi(r)}{r^2}$ with domain
$\mathcal{D}(A_0) = C_0^\infty(\mathbb{R}^3 \setminus 0).$
In the case when the assumption \eqref{eq.int01m} is satisfied, i.e.
\begin{equation}\label{eq.int01mm1}
   \chi(r)\geq c > \frac{3}{4}.
\end{equation}
the important result of Simon in \cite{S73} states that $A_0$ is essentially self-adjoint. As (unique) self - adjoint extension we can choose the Friedrichs extension $A_F$ of $A_0.$

In fact, introducing the quadratic form
 $$
q(u,v):=(\sqrt{-\Delta}u, \sqrt{-\Delta}v)_{L^2({\mathbb R}^3)}
+(V u, v)_{L^2({\mathbb R}^3)}
$$ on $C_0^\infty(\mathbb{R}^3 \setminus 0)$    and  using the Hardy inequality
\begin{equation}\label{HI1m}
  (r^{-2}u,u)_{L^2({\mathbb R}^3)} \leq 4 (-\Delta u, u)_{L^2(\mathbb{R}^3)},
\end{equation}
one can see that any Cauchy sequence $u_j$ (with respect to $q$)  of functions in $C_0^\infty(\mathbb{R}^3 \setminus 0)$ is a Cauchy sequence in $H^1$ so we have  \begin{equation}\label{HI0m}
     {\mathcal D}(q) \subset H^1(\mathbb{R}^3),
\end{equation}
where the domain  ${\mathcal D}(q) $ is  completion of $C_0^\infty(\mathbb{R}^3 \setminus 0)$ with respect to $q.$
The domain  ${\mathcal D}(q) $  is simply
\begin{equation}\label{HI0m1}
     {\mathcal D}(q) =  H^1(\mathbb{R}^3).
\end{equation}

To verify this it is sufficient to show that any $C_0^\infty(\mathbb{R}^3)$ function $f(x)$
can be approximated in $H^1(\mathbb{R}^3)$ by functions in $C_0^\infty(\mathbb{R}^3 \setminus 0).$
For the purpose we define $\varphi \in C_0^\infty(\mathbb{R}^3)$ such that
$$ \varphi(x)= \left\{
\begin{array}{cc}
  1   & \mbox{if $|x|\leq 1$} \\
   0  & \mbox{if $|x| \ge 2.$}
\end{array}
\right.
$$
Now we can construct easily the needed approximating  sequence
$$f_k(x) = (1-\varphi(kx))f(x) \in C_0^\infty(\mathbb{R}^3 \setminus 0), \ k \in \mathbb {N}. $$ Indeed, we have the pointwise estimates
$$| f_k(x) - f(x) | \lesssim |\varphi(kx)|,\ \  |\nabla f_k(x) - \nabla f(x) | \lesssim k |(\nabla \varphi)(kx)| + |\varphi(kx)| $$
and use the fact that
$$ \|\varphi(kx)\|_{L^2(\mathbb {R}^3)} + k\|(\nabla \varphi)(kx)\|_{L^2(\mathbb {R}^3)} \to 0$$
as $k \to \infty.$

Recall that the Friedrichs extension $A_F$ has domain
\begin{equation}\label{eq.FE1}
    \mathcal {D} (A_F) = \left\{ f \in   {\mathcal D}(q) , \mbox {$g \in \mathcal{D}(q) \to q(f,g)$ is bounded in $L^2$} \right\}.
\end{equation}
More precisely, the $L^2$ boundedness means that for fixed $f \in  \mathcal {D}(q)$ we have
$$
|g(f,g)| \le C_f \|g \|_{L^2(\mathbb {R}^3)}.
$$
Then the  Riesz representation theorem implies that $A_Ff$ is well defined, since  $$ q(f,g) = (A_Ff,g)_{L^2} $$
and the Friedrichs theorem guarantees that $A_F$ is non negative self - adjoint operator with
$$ \mathcal {D}(\sqrt{A_F}) = \mathcal {D}(q).  $$

The above observations lead to the following well -known conclusion.

\begin{lemma} \label{l.daf1m}
 The operator
$A_0 = -\Delta + \frac{\chi(r)}{r^2}$ with $\chi(r) \geq c > - 1/4$  and with domain
$\mathcal{D}(A_0) = C_0^\infty(\mathbb{R}^3 \setminus 0)$ has a Friedrichs extension $A_F,$ such that
\begin{description}
  \item[i)] we have $$ \mathcal {D}(\sqrt{A_F}) = \mathcal {D}(q) = H^1(\mathbb{R}^3); $$
  \item[ii)] for $u \in \mathcal{D}(\sqrt{A_F})$  we have \begin{equation}\label{equivm1}
\|\sqrt{A_F}\, u\|_{L^2( \mathbb{R}^3)}\sim \|\nabla u\|_{L^2( \mathbb{R}^3)}.
\end{equation}
\end{description}
\end{lemma}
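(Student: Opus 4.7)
The plan is to deduce both parts from a single two-sided comparison $q(u,u)\sim \|\nabla u\|_{L^2}^2$ on the form core $C_0^\infty(\mathbb{R}^3\setminus 0)$, followed by a density argument. Friedrichs theory already delivers $\mathcal{D}(\sqrt{A_F})=\mathcal{D}(q)$ with $\|\sqrt{A_F}u\|_{L^2}^2=q(u,u)$, so the only substantive content is the identification $\mathcal{D}(q)=H^1(\mathbb{R}^3)$ in (i); assertion (ii) will then follow as a corollary of the norm comparison extended by density.

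For the comparison on the core, I would combine the Hardy inequality \eqref{HI1m} with the boundedness of $\chi$. From above, $(Vu,u)_{L^2}\leq \|\chi\|_{L^\infty}(r^{-2}u,u)_{L^2}\leq 4\|\chi\|_{L^\infty}\|\nabla u\|_{L^2}^2$ yields $q(u,u)\leq (1+4\|\chi\|_{L^\infty})\|\nabla u\|_{L^2}^2$. From below, $(Vu,u)\geq c\,(r^{-2}u,u)$ combined with Hardy (used only when $c<0$) gives $q(u,u)\geq (1-4\max(0,-c))\|\nabla u\|_{L^2}^2$, which is a positive multiple of $\|\nabla u\|_{L^2}^2$ thanks to the strict inequality $c>-1/4$. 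Consequently, a sequence in $C_0^\infty(\mathbb{R}^3\setminus 0)$ is $q$-Cauchy if and only if it is $H^1$-Cauchy, so $\mathcal{D}(q)$ embeds continuously into $H^1(\mathbb{R}^3)$ with equivalent norms.

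For the reverse inclusion $H^1(\mathbb{R}^3)\subset\mathcal{D}(q)$, density of $C_0^\infty(\mathbb{R}^3)$ in $H^1(\mathbb{R}^3)$ reduces the task to approximating any $f\in C_0^\infty(\mathbb{R}^3)$ in $H^1$ by elements of $C_0^\infty(\mathbb{R}^3\setminus 0)$. I would use the cutoff sequence $f_k(x)=(1-\varphi(kx))f(x)$ displayed in the excerpt and verify $f_k\to f$ in $H^1$ via the scaling identities $\|\varphi(k\,\cdot\,)\|_{L^2(\mathbb{R}^3)}=k^{-3/2}\|\varphi\|_{L^2}\to 0$ and $\|k(\nabla\varphi)(k\,\cdot\,)\|_{L^2(\mathbb{R}^3)}=k^{-1/2}\|\nabla\varphi\|_{L^2}\to 0$. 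The comparison of the previous paragraph then upgrades this to convergence in the $q$-norm, placing $f$ in $\mathcal{D}(q)$. Statement (ii) is immediate afterwards by extending $q(u,u)\sim \|\nabla u\|_{L^2}^2$ from the core to all of $H^1(\mathbb{R}^3)$ by density.

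The only delicate point is the Hardy absorption in the lower bound, which uses the strict inequality $c>-1/4$; at the threshold $c=-1/4$ the coercivity of $q$ on $H^1$ is lost and the identification $\mathcal{D}(q)=H^1(\mathbb{R}^3)$ breaks down. Under the nonnegativity hypothesis $V\geq 0$ that is actually in force in Theorem \ref{main}, this issue evaporates entirely and the whole argument is essentially bookkeeping built on top of the sketch already present in the excerpt.
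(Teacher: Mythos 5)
Your argument is correct and follows essentially the same route as the paper: the two-sided comparison of $q(u,u)$ with $\|\nabla u\|_{L^2}^2$ via the Hardy inequality and the boundedness of $\chi$, the identification $\mathcal{D}(q)=H^1(\mathbb{R}^3)$ through the cutoff approximation $f_k=(1-\varphi(k\cdot))f$ with the same scaling estimates, and then Friedrichs theory giving $\mathcal{D}(\sqrt{A_F})=\mathcal{D}(q)$ and the norm equivalence. Your treatment is in fact slightly more explicit than the paper's sketch about the lower-bound absorption when $c<0$, but it is the same proof.
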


Our next step is to give appropriate characterizations of the domains ${\mathcal D}(A_F)$  in the case when $c>3/4.$
In this case, the result of Simon \cite{S73} guarantees that $A_F$ coincides with the graph closure $\overline{A_0}$ of $A_0$, i.e.

\begin{definition}
A function
$u \in L^2$ belongs to $ \mathcal {D} (\overline{A_0}) $ if and only if there exists a sequence $\{u_k\}_{k \in \mathbb{N}}$ in $C_0^\infty(\mathbb {R}^3\setminus 0)$ and a function $g \in L^2$ so that
\begin{equation}\label{eq.FE10}
\begin{aligned}
 \lim_{k \to \infty}\|u_k - u\|_{L^2} = 0, \\
 \lim_{k \to \infty}\|A_0(u_k) - g\|_{L^2} = 0.
\end{aligned}
\end{equation}
\end{definition}

We shall give now simple characterization of the unique symmetric closure $\overline{A_0}$ of the operator
$A_0 = -\Delta + \frac{\chi(r)}{r^2}$ with domain
$\mathcal{D}(A_0) = C_0^\infty(\mathbb{R}^3 \setminus 0)$ in the case
\begin{equation}\label{sas.1}
    \inf \chi = c > \frac{3}{4}.
\end{equation}

\begin{lemma} \label{l.daf1}
Assume \eqref{sas.1} is fulfilled. Then we have
\begin{description}
  \item[a)] $${\mathcal D}(\overline{A_0}) = \mathcal{D}(A_F) = \{u \in H^2(R^3); \ \ u(0)=0 \},$$
  \item[b)] for $u\in {\mathcal D}(\overline{A_0})$ we have
  \begin{Eq}\label{upper}
\|\Delta  u\|_{L^2( \mathbb{R}^3)}
\lesssim
\|A_F u\|_{L^2( \mathbb{R}^3)}.
\end{Eq}
\end{description}
\end{lemma}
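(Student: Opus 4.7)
The plan is to prove the inequality (b) first on the dense subspace $C_0^\infty(\mathbb{R}^3 \setminus \{0\})$, then extend it to all of $\mathcal{D}(\overline{A_0})$ by density, and finally use (b) together with approximation arguments to obtain the characterization (a).

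The heart of the proof is the estimate
\[
\|\Delta v\|_{L^2(\mathbb{R}^3)} \lesssim \|A_0 v\|_{L^2(\mathbb{R}^3)}, \qquad v \in C_0^\infty(\mathbb{R}^3 \setminus \{0\}).
\]
I would decompose $v$ into spherical harmonics, $v(r\omega) = \sum_{\ell,m} r^{-1} w_{\ell,m}(r) Y_{\ell,m}(\omega)$, so that the operators act diagonally and the bound reduces to the one-dimensional inequality
\[
\int_0^\infty (w'')^2\, dr \lesssim \int_0^\infty \left(-w'' + \frac{\kappa_\ell(r)}{r^2}\, w\right)^2 dr,
\]
uniformly in $\ell$, where $w \in C_0^\infty((0,\infty))$ and $\kappa_\ell(r) := \ell(\ell+1) + \chi(r) \ge c > 3/4$. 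Expanding the right-hand side and integrating by parts once gives a nonnegative term $2\int \kappa_\ell (w')^2/r^2$ plus cross terms of the form $\int \chi'(w^2)'/r^2$ and $\int \kappa_\ell(w^2)'/r^3$. A further integration by parts is carried out only against the explicit smooth weights---so that $\chi$ is differentiated at most once---and the single remaining $\chi'$-term is absorbed via Cauchy--Schwarz against $\int \chi (w')^2/r^2$. The strict inequality $\chi \ge c > 3/4$, which is the same threshold appearing in Simon's essential self-adjointness theorem, is exactly what makes the resulting quadratic form coercive and allows the estimate to close.

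Once (b) is proved on the core, the extension to $u \in \mathcal{D}(\overline{A_0})$ is automatic: by definition there is a sequence $u_j \in C_0^\infty(\mathbb{R}^3 \setminus \{0\})$ with $u_j \to u$ in $L^2$ and $A_0 u_j \to A_F u$ in $L^2$, and applying the core estimate to $u_j - u_k$ makes $\{\Delta u_j\}$ Cauchy in $L^2$; since the distributional Laplacian is closed, the limit is $\Delta u$, and passage to the limit yields both $u \in H^2(\mathbb{R}^3)$ and the bound (b). The forward inclusion in (a) is then immediate: $H^2(\mathbb{R}^3) \hookrightarrow C^{0,1/2}(\mathbb{R}^3)$ promotes the $H^2$-convergence of the $u_j$ to uniform convergence, and since $u_j(0)=0$ for each $j$, the limit satisfies $u(0)=0$. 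For the reverse inclusion, given $u \in H^2(\mathbb{R}^3)$ with $u(0)=0$, I would use the cut-off $u_k(x) = (1-\varphi(kx))u(x)$, which vanishes near the origin; the commutator terms in $A_0(u-u_k)$, supported in the shell $\{|x|\sim 1/k\}$, vanish in $L^2$ thanks to the pointwise estimate $|u(x)|\lesssim |x|^{1/2}\|u\|_{H^2}$ (from the H\"older embedding and $u(0)=0$) together with the second-order Hardy-type bound $\|u/r^2\|_{L^2}\lesssim \|\Delta u\|_{L^2}$, a consequence of the same spherical-harmonic reduction used in the first step.

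The main obstacle is the $\ell$-uniform one-dimensional estimate in the first step: since $\chi$ is only $C^1$, no bound involving $\chi''$ is available, and a naive sequence of integrations by parts would produce such a term. The key is that the margin $\chi > 3/4$ must be exploited quantitatively---the estimate degenerates as $c \searrow 3/4$, the limit-point/limit-circle threshold---so the computation must be organized so that $\chi'$ appears in at most one integrand, where it can be absorbed by Cauchy--Schwarz against the nonnegative quantity $\int \chi (w')^2/r^2$.
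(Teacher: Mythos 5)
Your route—proving the second-order bound by expanding $\int(-w''+\kappa_\ell w/r^2)^2$ mode by mode—is genuinely different from the paper's, but the step you yourself identify as the main obstacle does not close under the stated hypotheses. After the integrations by parts (done so that $\chi$ is differentiated only once) the expansion reads
\begin{equation*}
\int (w'')^2\,dr+2\int \kappa_\ell \frac{(w')^2}{r^2}\,dr+\int(\kappa_\ell^2-6\kappa_\ell)\frac{w^2}{r^4}\,dr
+2\int \chi'\,\frac{w w'}{r^2}\,dr+2\int \chi'\,\frac{w^2}{r^3}\,dr ,
\end{equation*}
and the two $\chi'$-terms cannot be absorbed. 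First, the lemma assumes only that $\chi$ is bounded, non-increasing, continuous on $[0,\infty)$ and $C^1$ on $(0,\infty)$: there is no pointwise bound on $|\chi'|$, not even $r|\chi'(r)|\lesssim 1$ (the hypotheses allow $\chi$ to drop by $2^{-k}$ across intervals of length $100^{-k}$ near $r=2^{-k}$), so Cauchy--Schwarz of $2\int\chi' ww' r^{-2}$ against $\int\chi (w')^2 r^{-2}$ leaves a factor $\int(\chi')^2\chi^{-1}w^2r^{-2}$ that is controlled by nothing at hand; the sign $\chi'\le 0$ does not help because $ww'$ has no sign. Second, even if one did assume $r|\chi'|\lesssim 1$, there is no room to absorb: for constant $\kappa$ the positive terms, estimated with the sharp one-dimensional Hardy constants $\int(w'')^2\ge\tfrac14\int(w')^2r^{-2}$ and $\int(w')^2r^{-2}\ge\tfrac94\int w^2r^{-4}$, give exactly the margin $(\kappa-\tfrac34)^2\int w^2r^{-4}$, using the full strength of both; any $\varepsilon$-absorption therefore needs the extra terms to be small relative to $(c-\tfrac34)^2$, and nothing in the assumptions relates the size of $\chi'$ to the margin $c-\tfrac34$. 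So the method, as proposed, fails precisely at the sharp threshold it is meant to exploit.

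The paper avoids the difficulty by never differentiating $\chi$: it multiplies $A_0u_k=g_k$ by $u_k$ and integrates with the flat measure $dr\,d\omega$ (i.e.\ pairs with $u_k/r^2$ in $L^2(\mathbb{R}^3)$), so the potential enters only through $+\int\chi\,u_k^2r^{-2}$, and the one-dimensional Hardy inequality with constant $\tfrac14$ gives the first-order estimate $(c-\tfrac34)\,\||x|^{-2}u_k\|_{L^2}\le\|g_k\|_{L^2}$; part (b) and the inclusion into $\{u\in H^2,\ u(0)=0\}$ then follow from $\|\Delta u\|\le\|A_Fu\|+\|\chi\|_\infty\||x|^{-2}u\|$, with no second-order expansion at all. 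This weighted $L^2$ bound, not a bound on $w''$, is the real content of the lemma, and any repair of your argument would in effect have to prove it first. A secondary point: in your reverse inclusion you invoke ``$\|u/r^2\|_{L^2}\lesssim\|\Delta u\|_{L^2}$''; in $\mathbb{R}^3$ this is false without the condition $u(0)=0$, and with that condition it is exactly the nontrivial fact needed, so it cannot be cited as a by-product of your first step and must be established separately (the paper's cut-off construction at the form level, or a direct Hardy argument using $u(0)=0$, does this).
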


\begin{proof}
Take sequence $\{u_k\}_{k \in \mathbb{N}}$ in $C_0^\infty(\mathbb {R}^3\setminus 0)$ and set
$$ g_k = -\Delta u_k + \frac{\chi(r)u_k}{r^2}.$$
Using radial coordinates $r=|x|, \omega = x/|x|$ we rewrite the  above relation as equation
$$ -r^{-2}\partial_r( r^2 \partial_r) u_k - \frac{1}{r^2} \Delta_{\mathbb{S}^2}u_k + \frac{\chi(r)u_k}{r^2} = g_k.$$
We multiply this equation by $u_k(r\omega)$ and integrate over $r \in (0,\infty),$ $\omega \in \mathbb{S}^2$ using the measure $dr d\omega$
\begin{equation}\label{eq.miid1}
   \left\{ \begin{aligned}
    &\int_0^\infty \int_{\mathbb{S}^2} |\partial_r u_k(r\omega)|^2  d\omega dr -  \int_0^\infty \int_{\mathbb{S}^2} \frac{| u_k(r\omega)|^2}{r^2}  d\omega dr + \\
    & + \int_0^\infty \int_{\mathbb{S}^2} \frac{|\nabla_\omega u_k(r\omega)|^2}{r^2}  d\omega dr + \\
    & + \int_0^\infty \int_{\mathbb{S}^2} \frac{\chi(r) | u_k(r\omega)|^2}{r^2}  d\omega dr = \int_0^\infty \int_{\mathbb{S}^2} g_k(r\omega) u_k(r\omega)  d\omega dr,
    \end{aligned}\right.
\end{equation}
where in the integration by parts we have used the fact that $u(r\omega)=0$ for $r$ close to $0.$ Using the Hardy inequality
$$ \int_0^\infty |\partial_r f(r)|^2 dr \geq \frac{1}{4} \int_0^\infty \frac{| f(r)|^2}{r^2} dr $$
valid when $f(r)$ is $C^1$ function on $(0,\infty)$ with compact support   in $(0,\infty)$ we see that the left side of \eqref{eq.miid1} can be estimated from below by
$$ \left(c -\frac{3}{4} \right) \int_0^\infty \int_{\mathbb{S}^2} \frac{ | u_k(r\omega)|^2}{r^2}  d\omega dr . $$
We can bound the right side \eqref{eq.miid1} from above by using Cauchy inequality by
$$ \|g_k\|_{L^2(\mathbb{R}^3)}  \left( \int_0^\infty \int_{\mathbb{S}^2} \frac{ | u_k(r\omega)|^2}{r^2}  d\omega dr
 \right)^{1/2}. $$ Hence we have the estimate
$$\left(c -\frac{3}{4} \right) \left(\int_0^\infty \int_{\mathbb{S}^2} \frac{ | u_k(r\omega)|^2}{r^2}  d\omega dr \right)^{1/2} \leq \|g_k\|_{L^2(\mathbb{R}^3)}.$$
Therefore, we get
\begin{equation}\label{eq.cm341}
   \left(c -\frac{3}{4} \right) \left\| \frac{u_k}{|\cdot|^2} \right\|_{L^2(\mathbb{R}^3)}  \leq \|g_k\|_{L^2(\mathbb{R}^3)}.
\end{equation}
This estimate is valid replacing $u_k$ by $u_k-u_m$ so using the fact that $g_k$ is a Cauchy sequence in $L^2(\mathbb{R}^3)$, we see that $u_k(x)/|x|^2$ is a Cauchy sequence in $L^2(\mathbb{R}^3)$ so $u/|x|^2 \in L^2(\mathbb{R}^3) $ and from the equation
\begin{equation}\label{eq.eel1}
    -\Delta u + \frac{\chi(r)u}{r^2}  = g
\end{equation}
we deduce $u \in H^2(\mathbb{R}^3).$ In this case $u(0)$ is well defined and must be $0$ since
$$ \left\| \frac{u}{|\cdot|^2} \right\|_{L^2(\mathbb{R}^3)} < \infty.$$

In conclusion, we have established that
$${\mathcal D}(\overline{A_0}) = \mathcal{D}(A_F) \subseteq  \{u \in H^2(R^3); \ \ u(0)=0 \}$$
and (taking the limit $k \to \infty$ in \eqref{eq.cm341})
\begin{equation}\label{eq.cm342}
   \left(c -\frac{3}{4} \right) \left\| \frac{u}{|\cdot|^2} \right\|_{L^2(\mathbb{R}^3}  \leq \|g\|_{L^2(\mathbb{R}^3)}
\end{equation}
for any $u \in \mathcal {D}(\overline{A_0}).$

The opposite inclusion easily follows from Hardy inequality.
This completes the proof of a).
The proof of b) follows directly from \eqref{eq.cm342} and the equation \eqref{eq.eel1}.

This completes the proof.
\end{proof}

\section{Weighted energy estimates}

We plan to prove a  weighted $L^2$ - estimate of conformal type.
In principle the conformal type estimate can be derived using appropriate multipliers for
the linear wave equation
\begin{equation}\label{eq.wee1}
   \partial_t^2 u - \Delta u + Vu = F,
\end{equation}
assuming 
$F \in L^1( (0,\infty);L^2(\mathbb{R}^3))$
are supported in $\{|x| \leq t +1 \}$ for any $t \geq 0.$

We use spherical harmonics $Y^k_\ell, k \in \mathbb{Z}, |k| \leq \ell$ on the sphere $\mathbb{S}^2$ that are solutions to the equation
$$ -\Delta_{S^2} Y^k_\ell = \lambda_\ell Y^k_\ell,\  \ \lambda_\ell =  \ell (1+\ell), \ \ \ \ell = 0,1,2,\cdots. $$
Using radial coordinates $r=|x|, \omega=x/r$ and the expansions of $u,F$
\begin{equation}\label{eq.sh1}
\left\{
   \begin{aligned}
  & u(t,x) = \sum_{\ell=0}^\infty \sum_{k=-\ell}^\ell \frac{u^k_\ell(t,r)}{r} Y^k_\ell(\omega), \\ & F(t,x) = \sum_{\ell=0}^\infty \sum_{k=-\ell}^\ell \frac{F^k_\ell(t,r)}{r} Y^k_\ell(\omega),
   \end{aligned}
   \right.
\end{equation}
one can extend $u^k_\ell(t,r), F^k_\ell(t,r)$ as odd functions on $r \in \mathbb{R}$ and rewrite \eqref{eq.wee1} as
\begin{equation}\label{eq.wee4}
   (\partial_t^2 - \partial_r^2) u_\ell^k(t,r) +\left( V(r)+ \frac{\lambda_\ell}{r^2} \right) (u_\ell^k) = F^k_\ell(t,r).
\end{equation}

It is clear that $r^2V(r)=\chi(r)$  can be extended as even function on  $\mathbb{R}.$

To simplify the notations we shall omit the indices $k,\ell$ so we shall consider the 1D equation
\begin{equation}\label{eq.wee5-}
   (\partial_t^2 - \partial_r^2) u(t,r) +\left( V(r)+ \frac{\lambda}{r^2} \right) u(t,r) = F(t,r).
\end{equation}
Using the notations
$$ \nabla_\pm = \partial_t \pm \partial_r, \ \tau_\pm = 2+t\pm r ,$$ we can rewrite \eqref{eq.wee5-} as
\begin{equation}\label{eq.wee5}
   \nabla_+ \nabla_- u(t,r) +\left( V(r)+ \frac{\lambda}{r^2} \right) u(t,r) = F(t,r).
\end{equation}
We use as a multiplier
$$ M(u):= \tau_+^{s} \nabla_+ u
+\tau_-^{s}  \nabla_-u.$$

\begin{lemma}
Let $s \geq 0$ and $T>0$.
For $u \in C^2([0,T)\times ({\mathbb R}^3 \setminus 0) )$, we have
\begin{Eq}  \label{DF}
& M(u) \, \left[\partial_t^2 u-\partial_r^2 u+\left( V(r) + \frac{\lambda}{r^2}\right) u \right]=
\\
=\,
& \frac12(\partial_t+\partial_r )X_+u +\frac12
(\partial_t-\partial_r )X_-u
 +\mathfrak{R}|u|^2
\end{Eq}
where we put
\begin{Eq}\label{cee1}
X_\mp(u) & = \frac{1}{2}  \tau_\pm^s|\nabla_\pm u|^2 + \frac{\tau_\mp^s}{2}\left( V(r)+ \frac{\lambda}{r^2} \right)|u|^2,
\\
\mathfrak{R}(r,t) & = - \frac{(\tau_+^s-\tau_-^s)}{2r} \left(rV^\prime(r)+2V(r) \right) + \frac{\rho V}{r} +
\frac{\rho \lambda}{r^3},
\end{Eq}
with
\begin{equation}\label{wee61}
   \rho(t,r) = \tau_+^s-\tau_-^s -s(r\tau_+^{s-1}+ r \tau_-^{s-1}).
\end{equation}
\end{lemma}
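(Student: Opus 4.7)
The asserted identity is algebraic, valid pointwise on $(0,T)\times({\mathbb R}^3\setminus 0)$ for any $u\in C^2$, so the plan is a direct Leibniz-rule computation in the null frame $\nabla_\pm=\partial_t\pm\partial_r$. Writing $W(r):=V(r)+\lambda/r^2$ for brevity, I split the left-hand side using $\partial_t^2-\partial_r^2=\nabla_+\nabla_-=\nabla_-\nabla_+$ as $M(u)\,\nabla_+\nabla_- u+M(u)\,Wu$ and treat the two pieces separately.

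For the wave piece the key ingredients are the pointwise identity $(\nabla_\pm u)(\nabla_\mp\nabla_\pm u)=\tfrac12\nabla_\mp\bigl((\nabla_\pm u)^2\bigr)$ together with $\nabla_-\tau_+^s=\nabla_+\tau_-^s=0$ (which follows from $\nabla_-\tau_+=\nabla_+\tau_-=0$). These two facts allow each weight $\tau_\pm^s$ to be pulled inside the corresponding $\nabla_\mp$-derivative without generating any commutator error, producing a pure divergence whose two summands are exactly the kinetic contributions to $\nabla_+X_+$ and $\nabla_-X_-$.

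For the potential piece I use $u\,\nabla_\pm u=\tfrac12\nabla_\pm(u^2)$ and Leibniz on $\tau_\pm^s W$ to rewrite $M(u)\,Wu$ as a sum of two divergences, which supply the missing potential contributions to $\nabla_\pm X_\pm$, plus a pointwise remainder $R\,u^2$ with
\[
R(t,r)=-\tfrac12\bigl[\nabla_+(\tau_+^{s}W)+\nabla_-(\tau_-^{s}W)\bigr].
\]
Expanding using $\nabla_\pm\tau_\pm^s=2s\tau_\pm^{s-1}$ and $\nabla_\pm W=\pm(V'(r)-2\lambda/r^3)$ yields
\[
R=-s(\tau_+^{s-1}+\tau_-^{s-1})W-\tfrac12(\tau_+^s-\tau_-^s)\bigl(V'(r)-2\lambda/r^3\bigr),
\]
and it remains to recast this as the stated $\mathfrak{R}$ by substituting $W=V+\lambda/r^2$ and reorganising the $V$-, $V'$- and $\lambda/r^k$-contributions in line with the definition of $\rho$.

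The main obstacle is the bookkeeping in this last step. The combination $rV'(r)+2V(r)$ that appears in hypothesis (\ref{eq.in1}) has to be produced explicitly from the $V'$-piece together with a matching $V/r$ piece, while the leftover $V$- and $\lambda/r^3$-contributions must assemble into $\rho V/r$ and $\rho\lambda/r^3$; the factor $\rho=\tau_+^s-\tau_-^s-s(r\tau_+^{s-1}+r\tau_-^{s-1})$ is precisely what remains once the $\tau_+^s-\tau_-^s$ factor multiplying $V'$ is recycled into the $rV'+2V$ combination. No analytic subtlety arises, since every manipulation is a pointwise identity on $\{r>0\}$ and the regularity $u\in C^2$ makes all Leibniz steps legitimate.
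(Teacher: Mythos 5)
Your proposal is correct and follows essentially the same route as the paper: the same null-frame Leibniz computation with $\nabla_\pm$, the same use of $\nabla_-\tau_+^s=\nabla_+\tau_-^s=0$ and $\nabla_\pm\tau_\pm^s=2s\tau_\pm^{s-1}$ to produce the divergences $\nabla_\pm X_\pm(u)$, and the identical remainder $-\tfrac12\bigl[\nabla_+(\tau_+^{s}W)+\nabla_-(\tau_-^{s}W)\bigr]$ reorganised into $\mathfrak{R}$ via $\rho$. Note only that the identity you obtain, $M(u)F=\nabla_+X_+(u)+\nabla_-X_-(u)+\mathfrak{R}|u|^2$, agrees with the paper's proof rather than with the displayed statement, whose factors $\tfrac12$ in front of $(\partial_t\pm\partial_r)X_\pm u$ appear to be a typographical slip.
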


\begin{proof}
Using the commutator relations
$$ [\nabla_+, \nabla_-]=0,  [\nabla_-,\tau_+]= [\nabla_+,\tau_-] =0 ,$$
as well as the relations
$$\nabla_+ \tau_+^a = 2a\tau_+^{a-1}, \nabla_- \tau_-^a = 2a\tau_-^{a-1},$$
we arrive at
$$ M(u) F = \nabla_+ \left(\frac{1}{2}  \tau_-^s|\nabla_- u|^2 \right)  + \nabla_- \left(\frac{1}{2}  \tau_+^s|\nabla_+ u|^2\right) + $$ $$ +  \frac{\tau_+^s}{2}\left( V(r)+ \frac{\lambda}{r^2} \right) \nabla_+ |u|^2 +\frac{\tau_-^s}{2}\left( V(r)+ \frac{\lambda}{r^2} \right) \nabla_- |u|^2,$$
where $F$ represents the left-hand side of \eqref{eq.wee5}.
Since 
$$ X_\mp(u) = \frac{1}{2}  \tau_\pm^s|\nabla_\pm u|^2 + \frac{\tau_\mp^s}{2}\left( V(r)+ \frac{\lambda}{r^2} \right)|u|^2,$$
we obtain
$$   M(u) F = \nabla_+ X_+(u) +\nabla_- X_-(u)  + \mathfrak{R}|u|^2,$$ where
$$ \mathfrak{R} = - \nabla_- \left( \frac{\tau_-^s}{2}\left( V(r)+ \frac{\lambda}{r^2} \right) \right)-
\nabla_+ \left( \frac{\tau_+^s}{2}\left( V(r)+ \frac{\lambda}{r^2} \right) \right)$$
$$ = -(s \tau_-^{s-1} + s \tau_+^{s-1}) \left( V(r)+ \frac{\lambda}{r^2} \right) + \frac{(\tau_-^s-\tau_+^s)}{2} \left( V^\prime(r)- \frac{2\lambda}{r^3}\right)=$$
$$ = - \frac{(\tau_+^s-\tau_-^s)}{2r} \left(rV^\prime(r)+2V(r) \right) + \frac{\rho V}{r} +
\frac{\rho \lambda}{r^3}.
$$
This  completes the proof.
\end{proof}

\begin{remark}
It is easy to show that
\begin{equation}\label{eq.ron1}
    \rho(r,t) \geq 0 , \ \ \forall r\in [0,t+1], \ \forall s \in [1,2].
\end{equation}

Indeed, we have the relation
$$ \rho(r,t) = (2+t)^s f \left( \frac{r}{2+t}  \right),$$
where
 $$f(x):=\underbrace{(1+x)^s -sx(1+x)^{s-1}}_{= \phi(x)} - \underbrace{\left((1-x)^s
+
 sx(1-x)^{s-1}\right)}_{\phi(-x)}.  $$

Note that the derivative of $f$ is
$$
s(1-s)x ( (1+x)^{s-2}-(1-x)^{s-2} )
$$
and it is positive for $x \in (0,1).$ Hence $f(x) \geq f(0)=0.$	
\end{remark}

\begin{prop}
Suppose that $V$ is a non-negative function that satisfies \eqref{eq.in1}
and \eqref{eq.in1a1}.
Let $1<s<2$ and $0<\delta<s-1$.
We assume that $F \in C([0,T): L^2({\mathbb R}^3))$
satisfies $\supp F(t,\cdot) \subset B(1+t)$ and
that $u$ is the strong solution to
\begin{Eq}\label{linear}
& (\partial_t^2+A)u 
= F(t,x)
      \quad \mbox{for }\ (t,x) \in [0,T)\times \mathbb{R}^3,
\\
& u(0,x)=f(x), \ (\partial_t u)(0,x)=g(x) \quad \mbox{for}\ x \in {\mathbb R}^3
\end{Eq}
satisfying $\supp u(t,\cdot) \subset B(1+t)$.
Then, for $t \in [0,T)$, we have
\begin{equation} \label{CE}
\begin{aligned}
& \| (2+t-r)^{s/2} \nabla_{t,r} u(t)\|_{L^2({\mathbb R})}
\\ & \
+\sum_{j=1}^3 \left\| (2+t-r)^{s/2} \frac{|R_j u(t)|}{r} \right\|_{L^2({\mathbb R}^3)} \\
&
+\left\| (2+t-r)^{s/2}\, \frac{u(t)}{r}\right\|_{L^2({\mathbb R}^3)}
\le
C_\delta (\|g\|_{L^2({\mathbb R}^3)}+\|\nabla f\|_{L^2({\mathbb R}^3)}
\\
&  +
\|(2+t+r)^{s/2}(2+t-r)^{(1+\delta)/2} F\|_{L^2([0,T) \times {\mathbb R}^3)}).
\end{aligned}
\end{equation}
\end{prop}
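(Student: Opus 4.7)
The plan is to prove \eqref{CE} by decomposing into spherical harmonics, applying the multiplier identity \eqref{DF} mode-by-mode, integrating over the spacetime strip, discarding the non-negative remainder, and estimating the source coupling via weighted Cauchy--Schwarz.

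First, I expand $u$ and $F$ as in \eqref{eq.sh1}, extending each $u_\ell^k$, $F_\ell^k$ oddly in $r\in\mathbb{R}$, so that \eqref{eq.wee5} holds for every mode. Multiplying \eqref{eq.wee5} by $M(u_\ell^k)$ and invoking the preceding Lemma, then integrating over $[0,t]\times\mathbb{R}$ (the $\partial_r$ flux vanishes by compact support), I obtain
$$\tfrac{1}{2}\mathcal{E}_{\ell,k}(t)+\int_0^t\!\!\int_\mathbb{R}\mathfrak{R}\,|u_\ell^k|^2\,drd\tau=\tfrac{1}{2}\mathcal{E}_{\ell,k}(0)+\int_0^t\!\!\int_\mathbb{R}M(u_\ell^k)F_\ell^k\,drd\tau,$$
with $\mathcal{E}_{\ell,k}(\tau):=\int_\mathbb{R}(X_++X_-)(\tau,r)\,dr$. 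The remainder $\mathfrak{R}|u_\ell^k|^2$ is non-negative: the first term of $\mathfrak{R}$ has the right sign by \eqref{eq.in1} combined with $\tau_+^s\geq\tau_-^s$, while \eqref{eq.ron1} together with $V,\lambda_\ell\geq 0$ handle the other two; thus this term may be dropped.

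For the lower bound, $\tau_-\leq\tau_+$ and \eqref{eq.in1a1} applied to \eqref{cee1} give
$$X_++X_-\geq \tfrac{1}{2}\tau_-^s\bigl(|\nabla_+ u_\ell^k|^2+|\nabla_- u_\ell^k|^2\bigr)+\tfrac{1}{2}\tau_-^s(c+\lambda_\ell)|u_\ell^k|^2/r^2.$$
Summing over $(\ell,k)$ and reassembling the 1D integrals into 3D norms (via orthonormality of the $Y_\ell^k$ together with $\sum_j|R_j v|^2=|\nabla_\omega v|^2$), the aggregate energy $\mathcal{E}(t):=\sum_{\ell,k}\mathcal{E}_{\ell,k}(t)$ dominates the square of the LHS of \eqref{CE}.

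The main obstacle is the source coupling. Cauchy--Schwarz in $(\tau,x)$ with weight $\tau_+^{s/2}\tau_-^{(1+\delta)/2}$, together with the pointwise bound $|M(u)|^2\leq (\tau_+^s+\tau_-^s)(\tau_+^s|\nabla_+u|^2+\tau_-^s|\nabla_-u|^2)\leq 4\tau_+^s(X_++X_-)$, yields
$$\Bigl|\sum_{\ell,k}\int_0^t\!\!\int M(u_\ell^k)F_\ell^k\Bigr|\leq 2\Bigl(\int_0^t\!\!\int\frac{X_++X_-}{\tau_-^{1+\delta}}\,dxd\tau\Bigr)^{1/2}\bigl\|\tau_+^{s/2}\tau_-^{(1+\delta)/2}F\bigr\|_{L^2([0,T)\times\mathbb{R}^3)}.$$
The delicate step is to bound the first factor by $C_\delta\sup_{\tau\leq t}\sqrt{\mathcal{E}(\tau)}$; this is where the hypothesis $\delta\in(0,s-1)$ enters, combined with the Fubini observation $\int_0^\infty\tau_-^{-(1+\delta)}d\tau\leq\delta^{-1}$ uniform in $r$ on support, and an absorption step using the positive contribution $\mathfrak{R}|u|^2$ set aside earlier. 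Once this is in hand, setting $\Phi(t):=\sup_{\tau\leq t}\sqrt{\mathcal{E}(\tau)}$ and letting $N$ denote the $F$-norm on the right of \eqref{CE}, the identity gives $\Phi(t)^2\leq\mathcal{E}(0)+C_\delta\Phi(t)N$; Young's inequality closes the estimate. The initial energy is bounded using $\tau_\pm|_{\tau=0}\in[1,3]$ on $\supp f,\supp g\subset B(1)$ to get $\mathcal{E}(0)\lesssim\|g\|_{L^2}^2+\|\nabla f\|_{L^2}^2+\|V^{1/2}f\|_{L^2}^2+\|\nabla_\omega f/r\|_{L^2}^2$, and the potential/angular terms are absorbed into $\|\nabla f\|_{L^2}^2$ via Hardy's inequality \eqref{HI1m} and Lemma \ref{l.daf1m}.
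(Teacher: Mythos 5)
Your setup (spherical harmonics reduction, the multiplier identity \eqref{DF}, positivity of $\mathfrak{R}$ via \eqref{eq.in1} and \eqref{eq.ron1}, lower bound on $X_++X_-$ via \eqref{eq.in1a1}, treatment of the data terms) matches the paper, but the step you defer as ``delicate'' is precisely the crux of the proof, and the tools you propose for it do not work. Integrating \eqref{DF} over the slab $[0,t]\times\mathbb{R}$ only produces the time-slice energies $\mathcal{E}(\tau)=\int_{\mathbb{R}}(X_++X_-)(\tau,r)\,dr$, and your claimed bound
$\int_0^t\!\!\int (X_++X_-)\,\tau_-^{-(1+\delta)}\,dr\,d\tau\le C_\delta\sup_{\tau\le t}\mathcal{E}(\tau)$
is not attainable from slab energies alone: the component $\tfrac12\tau_+^{s}|\nabla_+u|^2$ of $X_++X_-$ is typically concentrated near the light cone, where $\tau_-\sim 1$ and the weight $\tau_-^{-(1+\delta)}$ gives no gain, so the spacetime integral generically behaves like $\int_0^t\mathcal{E}(\tau)\,d\tau\sim t\,\sup\mathcal{E}$, destroying uniformity in $T$ (which is essential, since the proposition feeds the global argument). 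Your Fubini observation $\int \tau_-^{-(1+\delta)}d\tau\le\delta^{-1}$ at fixed $r$ would require $L^1_r L^\infty_\tau$-type control of $\tau_+^{s}|\nabla_+u|^2$, which $\sup_\tau\mathcal{E}(\tau)$ (an $L^\infty_\tau L^1_r$ bound) does not provide; and the reserved positive term $\mathfrak{R}|u|^2$ contains no derivative terms, so it cannot absorb $|\nabla_+u|^2$. A Gronwall closure fails for the same reason: the weight near the cone is $O(1)$, not $(2+\tau)^{-(1+\delta)}$.

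What the paper does instead, following Lai, is to integrate the identity \eqref{DF} not over slabs but over characteristic hexagons/pentagons bounded by the null lines $t-r=\beta$, $t+r=\alpha$, the slice $t=T$ and the data surface. This yields, in addition to the slice energy $E_0$, the \emph{characteristic} energies $E_\pm$ in \eqref{eq.esc4}, i.e.\ control of $\int\tau_+^{s}|\nabla_+u|^2$ along each outgoing null line $t-r=\beta$ (and symmetrically for $\nabla_-u$). The source term is then foliated along these characteristics: Cauchy--Schwarz on each line gives $E_-(\beta,T)^{1/2}\bigl(\int_{AB(\beta,T)}\tau_+^{s}|F|^2\bigr)^{1/2}$, one takes $\sup_\beta E_-(\beta,T)^{1/2}$, and the remaining $\beta$-integral is handled by Cauchy--Schwarz in $\beta$ using the integrability of $(2+\beta)^{-(1+\delta)}$ --- this is exactly where the hypothesis $\delta>0$ and the weight $\tau_-^{(1+\delta)/2}$ on $F$ in \eqref{CE} enter. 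Without this characteristic-energy structure your estimate of the coupling term cannot close, so the proposal has a genuine gap at its main step; to repair it you should replace the slab integration by the integration over the characteristic domains and carry out the foliation argument above.
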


\begin{proof}

Using radial coordinates $r=|x|, \omega=x/r$ and the expansions \eqref{eq.sh1} of $u,F,f,g$ in spherical harmonics,
we reduce the proof to the following estimate for $u^k_\ell$
\begin{equation} \label{CEs1}
\left\{
\begin{aligned}
& \| (2+t-r)^{s/2} \nabla_{t,r}u^k_\ell(t)\|_{L^2({\mathbb R})}
+ \\ &+
\ell \left\| (2+t-r)^{s/2}\, \frac{u^k_\ell(t)}{r}\right\|_{L^2({\mathbb R})}
\le\\
&\le
C_\delta (\|g^k_\ell\|_{L^2({\mathbb R})}+\|\nabla f^k_\ell\|_{L^2({\mathbb R})}
\\
&  +
\|(2+t+r)^{s/2}(2+t-r)^{(1+\delta)/2} F^k_\ell\|_{L^2([0,T) \times {\mathbb R})}).
\end{aligned}
\right.
\end{equation}
Here and below $u^k_\ell$ are  solutions to  the $1D$ semilinear wave equation of type \eqref{eq.wee5}, i.e.
\begin{equation}\label{eq.wee5m}
   (\partial_t^2 - \partial_r^2) u^k_\ell(t,r) +\left( V(r)+ \frac{\lambda_\ell}{r^2} \right) u^k_\ell(t,r) = F^k_\ell(t,r)
\end{equation}
with initial data $f^k_\ell, g^k_\ell.$

To arrive at \eqref{CE} it is sufficient to take into account the Hardy type estimates
\eqref{hardy1} and see  that \eqref{CEs1} implies \eqref{CE}.

Then the identity \eqref{DF} suggests to integrate the  expression
$$ \frac12(\partial_t+\partial_r )X_+u^k_\ell +\frac12
(\partial_t-\partial_r )X_-u^k_\ell
 +\mathfrak{R}|u^k_\ell|^2 $$
 in the right side of \eqref{DF} over appropriate space time domain.
To explain more clearly the domain of integration we take $T >1$ and then choose two positive parameters $\alpha, \beta$ satisfying the relations
\begin{equation}\label{eq.pent1m}
  2T+1 > \alpha, \
  \alpha>\beta >-1,\
  \ \frac{\alpha + \beta}{2} > T.
\end{equation}

First, we consider the case
\begin{equation}\label{eq.pent1}
  2T+1 > \alpha, \
  \alpha>\beta >1,\
  \ \frac{\alpha + \beta}{2} > T.
\end{equation}
In this case the domain for $(\alpha,\beta)$ is represented by   the hexagon $\mathrm{hex}(\alpha,\beta,T)=ABCDEF$ on Figure \ref{fig:f0}.

Applying the Gauss - Green formula for the integral of \eqref{DF} over the hexagon, we find
$$ \iint_{\mathrm{hex}(\alpha,\beta,T)} M(u^k_\ell) F^k_\ell = E_-(\beta,T) + $$ $$ E_+(\alpha,T) + E_0(\alpha,\beta,T) + \iint_{\mathrm{hex}(\alpha,\beta,T)}\mathfrak{R}|u^k_\ell|^2 -\frac{1}{2} \int_{EF}X_+u^k_\ell + X_-u^k_\ell, $$
where
$$ E_-(\beta,T) = \frac{1}{\sqrt{2}}\int_{AB(\beta,T)}X_-u^k_\ell, \ E_+(\alpha,T) = \frac{1}{\sqrt{2}}\int_{CD(\beta,T)}X_+u^k_\ell$$
$$ E_0(\alpha,\beta,T) = \frac{1}{2}\int_{BC(\alpha,\beta,T)}X_+u^k_\ell + X_-u^k_\ell$$
Of special interests are the segments $AB, BC, CD$ parameterized as follows:
$$ AB(\beta): r =  \rho, t =\rho+\beta,  \ \rho \in [-\frac{\beta+1}{2},T-\beta],$$
$$ BC(\alpha,\beta) : r = \rho, t =T, \ \ \rho \in [T-\beta, \alpha -T],$$
$$ CD(\alpha) : r =\rho , t = \alpha - \rho, \ \rho \in \left[\alpha-T, \frac{\alpha + 1}{2} \right] $$
\begin{figure}[htb!]
\centering%
\includegraphics[scale=0.3]{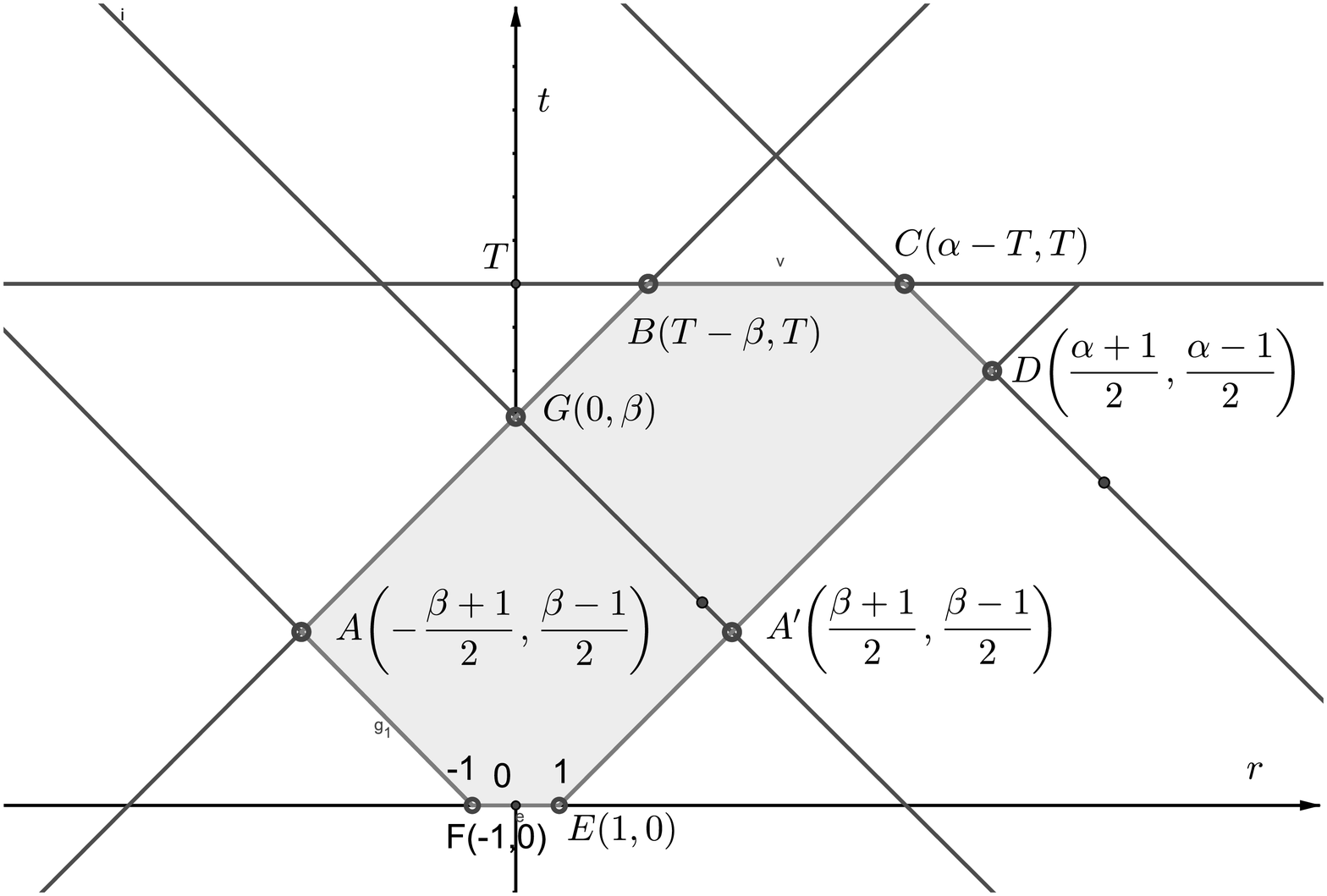}
\caption{Domain of integration for $\beta>1.$}
\label{fig:f0}
\end{figure}
We have the following estimates
$$ \left| \int_{EF}X_+u^k_\ell + X_-u^k_\ell \right| \lesssim \|g^k_\ell\|^2_{L^2({\mathbb R})}+\|\nabla f^k_\ell\|^2_{L^2({\mathbb R})}$$
and
$$ \left|\iint_{\mathrm{hex}(\alpha,\beta,T)} M(u^k_\ell) F^k_\ell \right| \leq \int_0^T \int_\mathbb{R} \left| M(u^k_\ell) F^k_\ell \right|dr dt.$$
Hence, we have the estimate
\begin{equation}\label{eq.esc1}
  \left\{  \begin{aligned}
    E_-(\beta,T) + E_+(\alpha,T) + E_0(\alpha,\beta,T) + \iint_{\mathrm{hex}(\alpha,\beta,T)}\mathfrak{R}|u^k_\ell|^2 \lesssim \\ \lesssim  \|g^k_\ell\|^2_{L^2({\mathbb R})}+\|\nabla f^k_\ell\|^2_{L^2({\mathbb R})} +   \int_0^T \int_\mathbb{R} \left| M(u^k_\ell) F^k_\ell \right|dr dt
\end{aligned}\right.
\end{equation}

Using \eqref{cee1}, we deduce the following estimates
\begin{equation}\label{eq.esc4}
  \left\{  \begin{aligned}
 &    E_-(\beta,T) \geq \int_{AB(\beta,T)} (2+t+r)^{s}  (\partial_{t} u+\partial_r u)^2, \\
& E_+(\alpha,T) \geq \int_{CD(\alpha,T)} (2+t-r)^{s}  (\partial_{t} u-\partial_r u)^2,\\
& E_0(\alpha, \beta,T) \geq \int_{BC(\alpha, \beta, T)} (2+t-r)^{s} \left( (\nabla_{t,r} u)^2 + \ell^2 \frac{|u|^2}{r^2}\right).
\end{aligned}\right.
\end{equation}
The multiplier
$$ M(u):= \tau_+^{s} \nabla_+ u
+\tau_-^{s}  \nabla_-u$$
can be substituted in the right side of \eqref{eq.esc1} so  we get
$$  \int_0^T \int_\mathbb{R} \left|\tau_+^{s/2} \nabla_+ u^k_\ell \tau_+^{s/2} F^k_\ell \right|dr dt
= \int_{-1}^{2T+1} d\beta \int_{AB(\beta,T)} \left|\tau_+^{s/2} \nabla_+ u^k_\ell \tau_+^{s/2} F^k_\ell \right| \lesssim $$
$$
\lesssim
  \int_{-1}^{2T+1} d\beta E_-(\beta,T)^{1/2} \left(\int_{AB(\beta,T)} \tau_+^{s}|F^k_\ell |^2 \right)^{1/2} \lesssim$$
$$ \lesssim
\sup_{-1<\beta<2T+1}
E_-(\beta,T)^{1/2} \left(\int_{-1}^{2T+1} d\beta\int_{AB(\beta,T)}(1+\beta)^{s} \tau_-^s |F^k_\ell |^2 \right)^{1/2}=$$
$$ =
\sup_{-1<\beta<2T+1}
E_+(\beta,T)^{1/2}
\left\| \tau_+^{s/2} \tau_-^{s/2}F^k_\ell\right\|_{L^2((0,T)\times \mathbb{R})}.$$
In a similar way we get
$$  \int_0^T \int_\mathbb{R} \left|\tau_-^{s/2} \nabla_- u^k_\ell \tau_-^{s/2} F^k_\ell \right|dr dt \lesssim
$$ $$ \lesssim
\sup_{-1<\alpha<2T+1}
E_-(\alpha,T)^{1/2}
\left\| \tau_+^{s/2} \tau_-^{s/2}F^k_\ell\right\|_{L^2((0,T)\times \mathbb{R})}.$$
Turning back  to \eqref{eq.esc1} we get
\begin{equation}\label{eq.esc2}
  \left\{  \begin{aligned}
    E_-(\beta,T) + E_+(\alpha,T) + E_0(\alpha,\beta,T) + \iint_{\mathrm{hex}(\alpha,\beta,T)}\mathfrak{R}|u^k_\ell|^2 \lesssim \\ \lesssim  \|g^k_\ell\|^2_{L^2({\mathbb R})}+\|\nabla f^k_\ell\|^2_{L^2({\mathbb R})} +   \left\| \tau_+^{s/2} \tau_-^{s/2}F^k_\ell\right\|^2_{L^2((0,T)\times \mathbb{R})}
\end{aligned}\right.
\end{equation}
This estimate and the lower bounds \eqref{eq.esc4} imply  \eqref{CEs1} and complete the proof.

In the case $-1  < \beta < 1$ we have pentagon $\mathrm{pen}(\alpha,\beta,T)=ABCDE$  on Figure \ref{fig:f2} and we follow the same argument and deduce
\begin{equation}\label{eq.esc2m}
  \left\{  \begin{aligned}
    E_-(\beta,T) + E_+(\alpha,T) + E_0(\alpha,\beta,T) + \iint_{\mathrm{pen}(\alpha,\beta,T)}\mathfrak{R}|u^k_\ell|^2 \lesssim \\ \lesssim  \|g^k_\ell\|^2_{L^2({\mathbb R})}+\|\nabla f^k_\ell\|^2_{L^2({\mathbb R})} +   \left\| \tau_+^{s/2} \tau_-^{s/2}F^k_\ell\right\|^2_{L^2((0,T)\times \mathbb{R})}
\end{aligned}\right.
\end{equation}

\begin{figure}[htb!]
\centering%
\includegraphics[scale=0.4]{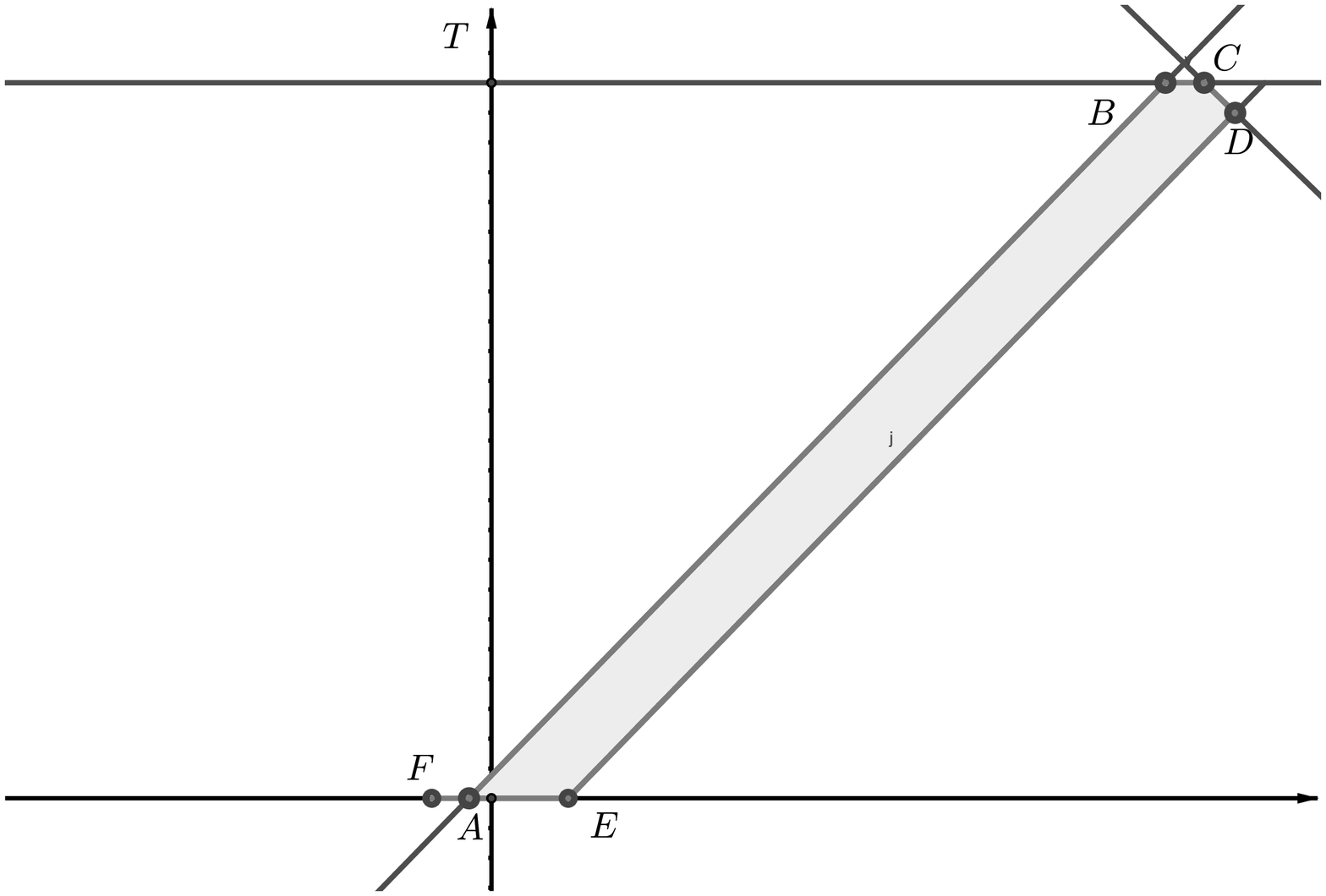}
\caption{Domain of integration for $\beta \in (-1,1)$}
\label{fig:f2}
\end{figure}

\end{proof}

\begin{coro}
Let $s\in (1,2)$, $\theta\in [0,1]$ and $q\in [2,\infty]$.
Define $\sigma\in [2, \infty]$, $\kappa\in [2,q]$ by
$$
\frac{1}{\sigma}=\frac{\theta}{2}+\frac{1-\theta}{\infty}, \ \
\frac{1}{\kappa}
=\frac{\theta}{q}+\frac{1-\theta}{2}.
$$
Then, under the assumptions of Proposition 3.2,
we have
\begin{Eq} \label{BE}
& \|r^{(1-3\theta)/2} (2+t+r)^{(1-\theta)/2}(2+t-r)^{(s-1+\theta)/2}
u(t,r\omega)\|_{L^\sigma_r L^\kappa(S^{2})}
\\
\lesssim \, &
\|g\|_{L^2({\mathbb R}^3)}+\|\nabla f\|_{L^2({\mathbb R}^3)}
\\ &
+\|(2+t+r)^{s/2}(2+t-r)^{(1+\delta)/2} F\|_{L^2([0,T] \times {\mathbb R}^3)}
\end{Eq}
for $t \in [0,T)$. Here and in what follows,
for $\sigma \in [1,\infty)$, $q\in [1,\infty]$, we denote
$$
\| f\|_{L^\sigma_r L^q(S^{2})}
:=\left(\int_0^\infty \|f(r\omega)\|_{L^q(S^2)}^\sigma r^2 dr \right)^{1/\sigma}.
$$
\end{coro}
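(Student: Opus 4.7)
The estimate \eqref{BE} sits on the segment joining two endpoints I can verify directly: at $\theta=0$ a pointwise-in-$r$ bound coming from Proposition~\ref{Lai}, and at $\theta=1$ an $L^2$-in-$r$ bound coming from the conformal energy estimate \eqref{CE} combined with Sobolev on the sphere. The plan is to prove both endpoints and then interpolate in the mixed-norm $L^\sigma_r L^\kappa(S^2)$ scale.

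At $\theta=0$ the exponent pair is $(\sigma,\kappa)=(\infty,2)$. Since $\supp u(t,\cdot)\subset B(1+t)$, Proposition~\ref{Lai} applies with $n=3$, $k=1+t$, $\phi=u(t,\cdot)$, and (using $1+k\pm r=2+t\pm r$) delivers
\[
\|r^{1/2}(2+t+r)^{1/2}(2+t-r)^{(s-1)/2}u(t,r\omega)\|_{L^\infty_r L^2(S^{2})} \lesssim \|(2+t-r)^{s/2}\partial_r u\|_{L^2({\mathbb R}^3)} + \Bigl\|(2+t-r)^{s/2}\tfrac{u}{r}\Bigr\|_{L^2({\mathbb R}^3)},
\]
and each term on the right is controlled by \eqref{CE}. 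At $\theta=1$ the pair is $(\sigma,\kappa)=(2,q)$; absorbing the Jacobian $r^2$ into the weight $r^{(1-3)/2}=r^{-1}$ reduces the target to $\bigl(\int_0^\infty(2+t-r)^{s}\|u(t,r\omega)\|_{L^q(S^{2})}^2\,dr\bigr)^{1/2}$. For $q=2$ this is literally $\|(2+t-r)^{s/2}u/r\|_{L^2({\mathbb R}^3)}$, covered by \eqref{CE}. For $2<q<\infty$ I apply the Sobolev embedding on the unit sphere,
\[
\|v\|_{L^q(S^{2})} \lesssim \|v\|_{L^2(S^{2})} + \sum_{j=1}^3\|R_j v\|_{L^2(S^{2})},
\]
with $v=u(t,r\,\cdot)$ (valid because the rotational fields $R_j$ are homogeneous of degree zero and generate the tangential gradient on every sphere), square it, integrate against $(2+t-r)^{s}\,dr$, and re-express the $r$-integrals as $L^2({\mathbb R}^3)$ norms; the result is exactly the sum of the $u/r$- and $R_j u/r$-contributions in \eqref{CE}. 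The limiting case $q=\infty$ is obtained the same way after using $H^2(S^{2})\hookrightarrow L^\infty(S^{2})$ and applying \eqref{CE} to $R_j R_k u$, which is legitimate because $V$ is radial so $R_j$ commute with $\partial_t^2+A$ and $R_j R_k u$ solves the linear problem with source $R_j R_k F$.

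Interpolation finishes the proof. Write $w_\theta(r,t):=r^{(1-3\theta)/2}(2+t+r)^{(1-\theta)/2}(2+t-r)^{(s-1+\theta)/2}$ and observe that $w_\theta=w_0^{1-\theta}w_1^{\theta}$, while $(1/\sigma,1/\kappa)$ is the linear interpolate of $(0,1/2)$ and $(1/2,1/q)$ at height $\theta$. Applying the Benedek--Panzone identity $[L^{\sigma_0}_r L^{\kappa_0}(S^{2}),L^{\sigma_1}_r L^{\kappa_1}(S^{2})]_\theta=L^{\sigma}_r L^{\kappa}(S^{2})$ (with the common measure $r^{2}\,dr$) to the linear data-to-weighted-solution map $(f,g,F)\mapsto w_\theta\,u$ via the complex method produces \eqref{BE}. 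The one bookkeeping subtlety, and the main obstacle, is pushing the multiplicative weight $w_\theta$ and the radial measure $r^2\,dr$ through the complex interpolation simultaneously; this is the usual Stein analytic-family device, and the resulting constant is uniform in $T$ because both endpoint constants inherit uniformity in $T$ from \eqref{CE}.
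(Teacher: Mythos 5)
Your argument follows essentially the same route as the paper: the $\theta=1$ endpoint is the paper's estimate \eqref{CE1} (Sobolev embedding $H^1(S^2)\hookrightarrow L^q(S^2)$, $q<\infty$, applied to the $u/r$ and $R_ju/r$ terms of \eqref{CE}), the $\theta=0$ endpoint is the paper's estimate \eqref{CE2} (Proposition \ref{Lai} with $n=3$, $k=1+t$, whose right-hand side is controlled by \eqref{CE}), and \eqref{BE} is then obtained by interpolating between the two. Two remarks on where you diverge. First, the operator-interpolation machinery you present as the main obstacle (Benedek--Panzone identification of complex interpolation spaces, Stein's analytic family with the varying weight) is unnecessary: since both endpoint bounds concern the same fixed function $u(t)$ and the weights factor pointwise as $w_\theta=w_0^{1-\theta}w_1^{\theta}$, H\"older's inequality on $S^2$ (using $1/\kappa=(1-\theta)/2+\theta/q$) followed by H\"older in $r$ (using $\sigma\theta=2$, the $r^2\,dr$ measure being absorbed entirely by the $L^2_r$ factor) gives directly
\begin{equation*}
\|w_\theta u(t)\|_{L^\sigma_rL^\kappa(S^2)}\le \|w_0 u(t)\|_{L^\infty_rL^2(S^2)}^{1-\theta}\,\|w_1 u(t)\|_{L^2_rL^q(S^2)}^{\theta},
\end{equation*}
with a constant trivially uniform in $T$; this elementary log-convexity step is all the paper's one-line ``interpolation'' requires. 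Second, your patch for the case $q=\infty$ does not prove \eqref{BE} as stated: applying \eqref{CE} to $R_jR_ku$ places $\|(2+t+r)^{s/2}(2+t-r)^{(1+\delta)/2}R_jR_kF\|_{L^2}$ and second-order angular norms of the data on the right-hand side, which \eqref{BE} does not contain, so you obtain a different (weaker-hypothesis, stronger-right-hand-side) inequality. This endpoint is likewise not covered by the paper's own proof, which restricts to $q\in[2,\infty)$, and it is never used later (Section 5 takes $\kappa\in(2,\infty)$), so apart from this inessential point your proof is correct and coincides with the paper's.
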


\begin{proof}
For $q\in [2, \infty)$ and $\phi\in H^1(S^2)$, we have
$$
\|\phi\|_{L^q(S^2)}\lesssim \|\phi\|_{H^1(S^2)}
\sim \sum_{|\alpha| \le 1} \| R^\alpha \phi\|_{L^2(S^2)}
$$
by the Sobolev embedding theorem. Therefore, \eqref{CE} implies
\begin{Eq} \label{CE1}
& \|r^{-1} (2+t-r)^{s/2}
u(t,r\omega)\|_{L^2_r L^q(S^{2})}
\\
\lesssim \, &
\|g\|_{L^2({\mathbb R}^3)}+\|\nabla f\|_{L^2({\mathbb R}^3)}
\\ &
+\|(2+t+r)^{s/2}(2+t-r)^{(1+\delta)/2} F\|_{L^2([0,T] \times {\mathbb R}^3)}
\end{Eq}
for $t \in [0,T)$.
Combining \eqref{CE} and Proposition \ref{Lai} with $n=3$, we get
\begin{Eq} \label{CE2}
& \|r^{1/2} (2+t+r)^{1/2}(2+t-r)^{(s-1)/2}
u(t,r\omega)\|_{L^\infty_r L^2(S^{2})}
\\
\lesssim \, &
\|g\|_{L^2({\mathbb R}^3)}+\|\nabla f\|_{L^2({\mathbb R}^3)}
\\ &
+\|(2+t+r)^{s/2}(2+t-r)^{(1+\delta)/2} F\|_{L^2([0,T] \times {\mathbb R}^3)}
\end{Eq}
for $t \in [0,T)$.
The interpolation between \eqref{CE1} and \eqref{CE2}
yields the desired estimate.
\end{proof}

\section{Proof of Theorem \ref{main}}

We introduce the following function space $X_T$ in which
we shall look for the solution to \eqref{pn}:
\begin{Eq*}
X_T:=\{ & v \in\bigcap_{j=0}^1 C^j([0,T]:{\mathcal D}(A^{\frac{2-j}2})) |\,
\|v\|_{X_T}<\infty,\,
\supp v(t,\cdot) \subset B(1+t) \},
\end{Eq*}
where we set
\begin{Eq}\label{norm1}
\|v\|_{X_T}:=\sup_{t \in [0,T]} \{  \sum_{j=0}^2
\|A^{\frac{2-j}2} v(t)\|_{L^2({\mathbb R}^3)}
+\sum_{j=1}^2
\|A^{\frac{2-j}2} \partial_t v(t)\|_{L^2({\mathbb R}^3)}
\}.
\end{Eq}
In order to handle the nonlinear term $F(u(t))=b|u(t)|^{p-1}u(t)$, we prepare the following estimates.

\begin{lemma}
Let $1<p<3$.
For $u$, $v\in X_T$ and $t \in [0,T]$, we have
\begin{Eq} 
 \|F(u(t))-F(v(t))\|_{L^2({\mathbb R}^3)}
 \label{diff1}
 \lesssim   (\|u\|_{X_T}^{p-1}
+\|v\|_{X_T}^{p-1})
\|u-v\|_{X_T},
\end{Eq}
\begin{Eq}
 \label{diff3}
\|
A^{\frac12} (F(u(t))-F(v(t)))\|_{L^2({\mathbb R}^3)}
 \lesssim
 (\|u\|_{X_T}^{p-1}+\|v\|_{X_T}^{p-1}) \|u-v\|_{X_T}.
\end{Eq} 
\end{lemma}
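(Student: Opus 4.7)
The plan is to translate the operator norms $\|A^{s/2}\cdot\|_{L^2}$ into classical Sobolev norms via Lemma~\ref{l.daf1}, after which both \eqref{diff1} and \eqref{diff3} reduce to standard pointwise mean-value bounds on $F(w)=b|w|^{p-1}w$ combined with H\"older's inequality. Specifically, by Lemma~\ref{l.daf1}(ii) we have $\|A^{1/2}\phi\|_{L^2} \sim \|\nabla \phi\|_{L^2}$, and by the upper bound \eqref{upper} we have $\|\Delta \phi\|_{L^2} \lesssim \|A\phi\|_{L^2}$. Therefore
\begin{equation*}
\sup_{t \in [0,T]} \bigl( \|v(t)\|_{H^2({\mathbb R}^3)} + \|\partial_t v(t)\|_{H^1({\mathbb R}^3)} \bigr) \lesssim \|v\|_{X_T},
\end{equation*}
and the three-dimensional Sobolev embeddings $H^2 \hookrightarrow L^\infty$ and $H^1 \hookrightarrow L^6$ then give $\sup_t \|v(t)\|_{L^\infty} + \sup_t \|\nabla v(t)\|_{L^6} \lesssim \|v\|_{X_T}$.

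\textbf{Proofs of the two bounds.} For \eqref{diff1}, the identity $F(u)-F(v) = pb(u-v)\int_0^1 |v+s(u-v)|^{p-1}\,ds$ gives the pointwise inequality $|F(u)-F(v)| \lesssim (|u|^{p-1}+|v|^{p-1})|u-v|$; taking $L^2$ norms and using the $L^\infty$ control above yields \eqref{diff1} at once. For \eqref{diff3}, Lemma~\ref{l.daf1}(ii) again reduces matters to bounding $\|\nabla[F(u)-F(v)]\|_{L^2}$, and the chain rule produces
\begin{equation*}
\nabla F(u)-\nabla F(v) = pb\,|u|^{p-1}\nabla(u-v) + pb\,(|u|^{p-1}-|v|^{p-1})\nabla v.
\end{equation*}
The first summand is bounded by $\|u\|_{L^\infty}^{p-1}\|\nabla(u-v)\|_{L^2} \lesssim \|u\|_{X_T}^{p-1}\|u-v\|_{X_T}$. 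For the second summand, when $p \ge 2$ the mean value theorem furnishes the Lipschitz bound $||u|^{p-1}-|v|^{p-1}| \lesssim (|u|^{p-2}+|v|^{p-2})|u-v|$; applying H\"older in $L^\infty\cdot L^\infty\cdot L^2$ and then Young's inequality to the mixed product $\|u\|_{L^\infty}^{p-2}\|v\|_{L^\infty}$ produces the required bound by $(\|u\|_{X_T}^{p-1}+\|v\|_{X_T}^{p-1})\|u-v\|_{X_T}$.

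\textbf{Main obstacle.} The subtle case is $1 < p < 2$, in which $|\cdot|^{p-1}$ is only $(p-1)$-H\"older continuous and the naive bound $||u|^{p-1}-|v|^{p-1}| \lesssim |u-v|^{p-1}$ leads to a nonlinear-in-$\|u-v\|_{X_T}$ factor incompatible with the Lipschitz conclusion of \eqref{diff3}. Recovering linearity requires a more delicate argument such as a region split into $\{\min(|u|,|v|) \ge |u-v|\}$ (where the refined bound $||u|^{p-1}-|v|^{p-1}| \lesssim \min(|u|,|v|)^{p-2}|u-v|$ is Lipschitz in $u-v$) and its complement, combined with a H\"older-in-$L^3\cdot L^6$ argument exploiting the $L^6$ control of $\nabla v$ from the first step. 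Fortunately, the target range in Theorem~\ref{main} is $p > 1+\sqrt{2}>2$, so only the straightforward Lipschitz regime is actually needed for the application to \eqref{pn}.
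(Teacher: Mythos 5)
Your proof is correct on the range that matters and reaches the same two estimates, but by a genuinely different route. The paper never passes through $L^\infty$: it bounds the pointwise differences exactly as you do, but then places all factors in $L^{2p}({\mathbb R}^3)$ by H\"older and controls $\|\varphi\|_{L^{2p}}$ via the Gagliardo--Nirenberg interpolation $\|\varphi\|_{L^{2p}}\lesssim\|\varphi\|_{L^2}^{1-\gamma}\|\nabla\varphi\|_{L^2}^{\gamma}$ with $\gamma=3(p-1)/(2p)$ (this is precisely where the hypothesis $p<3$ enters), and $\|\nabla\varphi\|_{L^{2p}}\lesssim\|\nabla\varphi\|_{L^2}+\|\Delta\varphi\|_{L^2}\lesssim\|A^{1/2}\varphi\|_{L^2}+\|A\varphi\|_{L^2}$, finishing with the same two ingredients you use, \eqref{equivm1} and \eqref{upper}. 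Your substitute is the embedding $H^2({\mathbb R}^3)\hookrightarrow L^\infty$ after converting $\|A\cdot\|_{L^2}$, $\|A^{1/2}\cdot\|_{L^2}$ into $H^2$, $H^1$ norms; this is simpler and for \eqref{diff1} does not even need $p<3$, whereas the paper's $L^{2p}$ bookkeeping has the advantage that the same computation, run with $v=0$, yields the bound $\|F(u(t))\|_{L^2}+\|A^{1/2}F(u(t))\|_{L^2}\lesssim\|u(t)\|_{L^{2p}}^{p-1}E(t)$ that is reused verbatim in the blow-up criterion of Theorem \ref{local}, which is formulated in terms of $\|u(t)\|_{L^{2p}}$; an $L^\infty$-based proof would not hand you that criterion directly. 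Your caveat about $1<p<2$ in \eqref{diff3} is well taken but is not a defect relative to the paper: the paper's own proof writes the factor $\|u(t)\|_{L^{2p}}^{p-2}+\|v(t)\|_{L^{2p}}^{p-2}$, i.e.\ it tacitly assumes $p\ge 2$ at the same spot, and, as you observe, only $p>1+\sqrt{2}>2$ is used in Theorem \ref{main}.
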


\begin{proof}
It follows that
\begin{Eq*}
& \| F(u(t))-F(v(t))\|_{L^2({\mathbb R}^3)}
\\
\lesssim & (\|u(t)\|_{L^{2p} ({\mathbb R}^3) }^{p-1}
+\|v(t)\|_{L^{2p}({\mathbb R}^3)}^{p-1} )
\|(u-v)(t)\|_ {L^{2p}({\mathbb R}^3)}.
\end{Eq*}
By the Gagliardo-Nirenberg inequality, we have
\begin{Eq*}
\|\varphi\|_{L^{2p}({\mathbb R}^3)}
\lesssim
\|\varphi\|_{L^{2}({\mathbb R}^3)}^{1-\gamma}
\|\nabla \varphi\|_{L^{2}({\mathbb R}^3)}^\gamma
\lesssim
\|\varphi\|_{L^{2}({\mathbb R}^3)}+
\|\nabla \varphi\|_{L^{2}({\mathbb R}^3)},
\end{Eq*}
where we put $\gamma=3(p-1)/(2p)\in (0,1)$
for $1<p< 3$.  
Therefore, we get \eqref{diff1}
by \eqref{equivm1}.

Noting that
\begin{Eq*}
& \|\nabla (F(u(t))-F(v(t)))\|_{L^2({\mathbb R}^3)}
\\
\lesssim & \|u(t)\|_{L^{2p} ({\mathbb R}^3) }^{p-1}
\|\nabla (u-v)(t)\|_ {L^{2p}({\mathbb R}^3)}
\\
 &
+\|\nabla v(t)\|_ {L^{2p}({\mathbb R}^3)}
(\|u(t)\|_{L^{2p}({\mathbb R}^3)}^{p-2}
+\|v(t)\|_{L^{2p}({\mathbb R}^3)}^{p-2})
\|(u-v)(t)\|_ {L^{2p}({\mathbb R}^3)}
\end{Eq*}
and
\begin{Eq*}
\|\nabla \varphi\|_{L^{2p}({\mathbb R}^3)}
\lesssim & \sum_{j=1}^3
\|\partial_j \varphi\|_{L^{2p}({\mathbb R}^3)}
\lesssim
 \sum_{j=1}^3 (
\|\partial_j \varphi\|_{L^{2}({\mathbb R}^3)}+
\|\nabla \partial_j \varphi\|_{L^{2}({\mathbb R}^3)})
\\
\lesssim
&
\|\nabla \varphi\|_{L^{2}({\mathbb R}^3)}+
\| \Delta \varphi\|_{L^{2}({\mathbb R}^3)}
\lesssim
\|A^\frac12 \varphi\|_{L^{2}({\mathbb R}^3)}+
\| A \varphi\|_{L^{2}({\mathbb R}^3)},
\end{Eq*}
we obtain \eqref{diff3}
by \eqref{equivm1} and \eqref{upper}.
This completes the proof.
\end{proof}

For a given  $(f,g)\in {\mathcal D}(A)\times {\mathcal D}(A^{\frac12})$ satisfying
$\supp f$, $\supp g\subset B(1)$, we define a sequence 
by
\begin{Eq}\label{pni}
u_{m+1}(t)=&u_0(t)
+\int_0^t \frac{\sin (t-\tau) \sqrt{A}}{\sqrt{A}} F(u_m(\tau)) d\tau
\   (m=0,1,\dots),
\\
u_0= & (\cos t \sqrt{A} ) f+\frac{\sin t\sqrt{A}}{\sqrt{A}} g.
\end{Eq}

\begin{thm} \label{local}
Let $1<p<3$.
Assume $(f,g)\in {\mathcal D}(A)\times {\mathcal D}(A^{\frac12})$ satisfies
$\supp f$, $\supp g\subset B(1)$.
Set
\begin{Eq}
\eta:=
\sum_{j=0}^2
\|A^{\frac{2-j}2} f\|_{L^2({\mathbb R}^3)}
+\sum_{j=1}^2
\|A^{\frac{2-j}2} g\|_{L^2({\mathbb R}^3)}.
\end{Eq}
Then there exists $T=T(\eta)>0$
such that \eqref{pn} admits a unique strong solution $u(t)$ in $X_{T}$.

Moreover, if $T_*$ is the supremum over all such $T$, then $T_*=\infty$ or
$\|u(t)\|_{L^{2p}({\mathbb R}^3)} \not\in L^\infty([0,T_*))$.
\end{thm}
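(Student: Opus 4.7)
The plan is to run a Picard iteration scheme on \eqref{pni} and read off the blow-up alternative from the quantitative dependence of the existence time on the data. The workhorse is the standard linear energy estimate for $(\partial_t^2+A)w=H$ with initial data $(f,g)$: applying conservation of $\|\partial_t w\|_{L^2}^2+\|A^{1/2}w\|_{L^2}^2$ to $w$ and to $A^{1/2}w$ (which is legal because $A$ commutes with itself on its domain), and recovering $\|w(t)\|_{L^2}$ from $\|f\|_{L^2}+\int_0^t\|\partial_\tau w\|_{L^2}\,d\tau$, one obtains for $t\in[0,T]$
\begin{equation*}
\|w\|_{X_T}\ \lesssim\ \eta\ +\ T\,\sup_{\tau\in[0,T]}\bigl(\|H(\tau)\|_{L^2}+\|A^{1/2}H(\tau)\|_{L^2}\bigr).
\end{equation*}
Finite speed of propagation for $\partial_t^2+A$ (which holds because $V\ge 0$, via the standard local energy argument) propagates $\supp u(t,\cdot)\subset B(1+t)$ through the iteration.

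Now apply this to $w=u_{m+1}$ with $H=F(u_m)$. Using Lemma 5.1 with $v\equiv 0$ (which gives $\|F(u_m)\|_{L^2}+\|A^{1/2}F(u_m)\|_{L^2}\lesssim\|u_m\|_{X_T}^p$), I get
\begin{equation*}
\|u_{m+1}\|_{X_T}\ \le\ C\eta+CT\,\|u_m\|_{X_T}^{p}.
\end{equation*}
Setting $R:=2C\eta$ and $T_1(\eta):=1/(4C^2 R^{p-1})$, induction yields $\|u_m\|_{X_T}\le R$ for $T\le T_1(\eta)$. Applying the same linear estimate to $u_{m+1}-u_m$ and using \eqref{diff1}--\eqref{diff3},
\begin{equation*}
\|u_{m+1}-u_m\|_{X_T}\ \le\ 2CT R^{p-1}\,\|u_m-u_{m-1}\|_{X_T},
\end{equation*}
so shrinking $T$ further makes the map a contraction and produces a unique fixed point $u\in X_T$ solving \eqref{IE}; uniqueness of strong solutions in $X_T$ follows by the same Lipschitz estimate applied to the difference of two candidates together with a Gronwall argument.

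For the blow-up alternative, suppose $T_*<\infty$ yet $M:=\sup_{t\in[0,T_*)}\|u(t)\|_{L^{2p}(\mathbb R^3)}<\infty$. From the calculation inside the proof of Lemma 5.1 one actually has
\begin{align*}
\|F(u(t))\|_{L^2}&\ \lesssim\ \|u(t)\|_{L^{2p}}^{p}\ \le\ M^{p},\\
\|A^{1/2}F(u(t))\|_{L^2}&\ \lesssim\ \|u(t)\|_{L^{2p}}^{p-1}\bigl(\|A^{1/2}u(t)\|_{L^2}+\|Au(t)\|_{L^2}\bigr)\ \lesssim\ M^{p-1}\|u(t)\|_{X_t},
\end{align*}
where the second line uses \eqref{equivm1} and \eqref{upper} exactly as in Lemma 5.1. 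Inserting this in the linear energy estimate and applying Gronwall bounds $\|u(t)\|_{X_t}$ uniformly on $[0,T_*)$ in terms of $M,\eta,T_*$. Since the local existence time $T(\eta')$ produced above depends monotonically on the data size $\eta'$, picking $t_0<T_*$ close enough to $T_*$ lets us restart the iteration with data $(u(t_0),\partial_t u(t_0))\in\mathcal{D}(A)\times\mathcal{D}(A^{1/2})$ and extend the solution past $T_*$, contradicting maximality.

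The main obstacle I anticipate is bookkeeping the "loop" whereby the top-order $\|Au\|_{L^2}$ estimate requires $\|A^{1/2}F(u)\|_{L^2}$, which via Lemma 5.1 costs one full derivative of $u$ and therefore contains $\|Au\|_{L^2}$ itself; this is precisely what makes the $p<3$ restriction (and the $L^{2p}$ criterion, rather than an $L^\infty$ one) the right framework, and absorption into the left-hand side requires picking $T$ small enough that the factor $T M^{p-1}$ is manageable before Gronwall is applied.
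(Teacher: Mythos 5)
Your proposal is correct and follows essentially the same route as the paper: local existence and uniqueness by Picard iteration/contraction in $X_T$ based on the Lipschitz estimates \eqref{diff1} and \eqref{diff3}, and the blow-up alternative by observing (from the proof of Lemma 5.1 with $v=0$) that $\|F(u(t))\|_{L^2}+\|A^{1/2}F(u(t))\|_{L^2}\lesssim \|u(t)\|_{L^{2p}}^{p-1}E(t)$, then closing with the Duhamel formula, Gronwall, and continuation past $T_*$. The extra details you supply (the explicit linear energy estimate, the contraction constants, finite propagation speed) are exactly what the paper compresses into "the standard argument," and your final worry about absorbing the top-order term is moot since Gronwall handles it, as your own argument already shows.
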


\begin{proof}
Due to \eqref{diff1} and \eqref{diff3}, the existence and uniqueness
follow from the standard argument.

In order to prove the blow-up criterion,
by assuming $T_*<\infty$
and there exists a constant $M$ such that
$\sup_{t \in [0,T_*)} \|u(t)\|_{L^{2p}({\mathbb R}^3)} \le M$,
we shall derive a contradiction.
In the following, we always assume $t \in [0,T_*)$.
In view of the proof of \eqref{diff1} and \eqref{diff3} as $v=0$, we get
\begin{Eq*} 
 \|F(u(t))\|_{L^2({\mathbb R}^3)}
 +\|A^{\frac12} F(u(t))\|_{L^2({\mathbb R}^3)}
 \lesssim  \|u(t)\|_{L^{2p} ({\mathbb R}^3)}^{p-1}  E(t),
\end{Eq*}
where we set
$$
E(t):= \sum_{j=0}^2
\|A^{\frac{2-j}2} u(t)\|_{L^2({\mathbb R}^3)}
+\sum_{j=1}^2
\|A^{\frac{2-j}2} \partial_t u(t)\|_{L^2({\mathbb R}^3)}.
$$
Thus, using the integral equation for the solution:
\begin{Eq*}
u(t)=u_0(t)
+\int_0^t \frac{\sin (t-\tau) \sqrt{A}}{\sqrt{A}} F(u(\tau)) d\tau
\end{Eq*}
we obatin
$$
E(t) \le C_0 (1+T_*) \eta+C_1 (1+T_*)  \int_0^t  M^{p-1} E(\tau) d\tau
$$
with some universal constants $C_0$, $C_1$.
By the Gronwall inequality, we get
\begin{Eq*}
E(t) \le  C_0(1+T_*)\eta \exp(C_1 M^{p-1}  (1+T_*)  T_*),
\end{Eq*}
which implies $\sup_{t \in [0,T_*)} E(t) <\infty$.
Therefore, we can extend $u(t)$ at $t=T_*$ so that
$(u(T_*), (\partial_t u)(T_*) )\in {\mathcal D}(A) \times
{\mathcal D}(A^{\frac{1}2})$.
But this means that the local solution $u(t)$ can be extended as a solution
of \eqref{pn} beyond $T_*$.
This contradcts the definition of $T_*$ and finishes the proof.
\end{proof}

In order to prove the global existence, we assume $p>1+\sqrt{2}$,
Then, since  $p^2-2p-1>0$ for $p>1+\sqrt{2}$, we can choose
$\delta>0$ 
so small that $p^2-2p-1>p\delta/2$.
For such $\delta$, we shall take $\theta\in(0,1)$ so that
$p^2\theta <(p+1)(p-2)$ and
$2p\theta<\delta$.
We define $\sigma=2/\theta$ and take $\kappa\in (2,\infty)$.
Now, we introduce the following quanity:
\begin{Eq*} 
\|\hspace{-0.5mm} | v \|\hspace{-0.5mm} |_{X_T}=
\sup_{t \in [0,T]} \{
\|w(t)  v(t)\|_{L^\sigma_r L^{\kappa}(S^2)}
+\sum_{j=1}^3 \|w(t) R_j v(t)\|_{L^\sigma_r L^\kappa(S^2)} \}.
\end{Eq*}
where the weight function $w=w(t,r)$ is defined by
\begin{Eq*}
w(t,r):=
r^{(1-3\theta)/2} \tau_+^{(1-\theta)/2} \tau_-^{(s-1+\theta)/2},
\quad  s=1+(2/p).
\end{Eq*}
In order to make use of the estimate of conformal type,
we prepare the following lemma.

\begin{lemma}
Assume $1+\sqrt{2}<p<3$.
Let $\delta$, $\theta$, $\sigma$, $\kappa$ and $s$ be as in the above.
Then, for $u\in X_T$ satisfying $\|\hspace{-0.5mm} | v \|\hspace{-0.5mm} |_{X_T}<\infty$,
we have
\begin{Eq}
 \label{diff2}
& \|\tau_+^{s/2} \tau_-^{(1+\delta)/2}F(u)\|_{L^2_T L^2({\mathbb R}^3)}
+ \sum_{j=1}^3
\|\tau_+^{s/2} \tau_-^{(1+\delta)/2}
R_j F(u)\|_{L^2_T L^2({\mathbb R}^3)}
\\ 
& \lesssim  \|\hspace{-0.5mm} | v \|\hspace{-0.5mm} |_{X_T}^p.
\end{Eq} 
\end{lemma}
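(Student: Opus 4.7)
The plan is to reduce the estimate to a mixed-norm H\"older/Sobolev argument on the sphere, a H\"older estimate in the radial variable, and an explicit check that a residual weighted integral in $(t,r)$ is finite. First I observe that $|F(u)|\lesssim |u|^{p}$ and, since $p>1$, the chain rule gives $|R_{j} F(u)|\lesssim |u|^{p-1}|R_{j} u|$, so it suffices to bound
\[
\int_0^T\!\!\int_{\mathbb{R}^3}\tau_+^{s}\,\tau_-^{1+\delta}\bigl(|u|^{2p}+|u|^{2(p-1)}|R_j u|^2\bigr)\,dx\,dt
\]
by $\|\hspace{-0.5mm} | u \|\hspace{-0.5mm} |_{X_T}^{2p}$. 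The algebraic observation driving the proof is that, writing $u=(wu)/w$, one has
\[
\tau_+^{s}\tau_-^{1+\delta}|u|^{2p}=\Psi(t,r)\,|wu|^{2p},\qquad \Psi(t,r):=r^{-p(1-3\theta)}\,\tau_+^{s-p(1-\theta)}\,\tau_-^{1+\delta-p(s-1+\theta)},
\]
and similarly with $|wu|^{2(p-1)}|wR_j u|^2$ for the rotational part. The task is then to absorb the $(wu)$-factors into the triple norm and control the remaining $(t,r)$-integral of $\Psi$.

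For the angular integration, since $w$ is $\omega$-independent and $R_j$ is tangential, one has $R_j(wu)=wR_j u$, so the Sobolev embedding $H^{1,\kappa}(S^2)\hookrightarrow L^{2p}(S^2)$ (valid for any $\kappa>2$, as $\dim S^2=2$) together with H\"older gives
\[
\int_{S^2}\bigl(|wu|^{2p}+|wu|^{2(p-1)}|wR_j u|^2\bigr)\,d\omega\lesssim B(t,r)^{2p},
\]
where $B(t,r):=\|(wu)(t,r\cdot)\|_{L^{\kappa}(S^2)}+\sum_{j=1}^3\|(wR_j u)(t,r\cdot)\|_{L^{\kappa}(S^2)}$ satisfies $\|B(t,\cdot)\|_{L^{\sigma}_{r}}\le \|\hspace{-0.5mm} | u \|\hspace{-0.5mm} |_{X_T}$ uniformly in $t$. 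For the radial integration I apply H\"older with conjugate exponents $\sigma/(2p)$ and $\sigma/(\sigma-2p)$; this is legitimate because $2p\theta<\delta<2/p$ forces $\sigma=2/\theta>2p$. The outcome is the pointwise-in-$t$ bound
\[
\int_{0}^{1+t}\!\Psi(t,r)\,B(t,r)^{2p}\,r^2\,dr\le c(t)\,\|B(t,\cdot)\|_{L^{\sigma}_{r}}^{2p},\qquad c(t):=\Bigl(\int_{0}^{1+t}\Psi(t,r)^{\lambda}\,r^{2}\,dr\Bigr)^{1/\lambda},
\]
with $\lambda=1/(1-p\theta)$. Integration in $t$ reduces the whole matter to showing $\int_{0}^{T} c(t)\,dt\lesssim 1$ uniformly in $T$.

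The main obstacle is this last weight bound, which is where the supercriticality $p>1+\sqrt{2}$ enters. Near the origin, the exponent of $r$ in $\Psi^{\lambda}\,r^{2}$ is $2-p(1-3\theta)\lambda$, which is $>-1$ because $p<3$ and $\theta$ is small, so there is no singularity there. On the support $r\le 1+t$ one has $\tau_-\ge 1$, and both exponents in $\Psi$ are negative: negativity of $s-p(1-\theta)$ follows from $(p+1)(p-2)>p^{2}\theta$, and negativity of $1+\delta-p(s-1+\theta)=\delta-1-p\theta$ follows from $p\theta<\delta/2<1$. Splitting the radial integral at $r=(1+t)/2$ to distinguish the region where $\tau_+\sim\tau_-$ from the region where $\tau_+\sim 2+t\gg\tau_-$, and using $\int_{1}^{2+t}\tau_{-}^{-a}\,d\tau_{-}\lesssim 1$ for $a>1$, an elementary computation yields $c(t)\lesssim (2+t)^{-\mu}$ with $\mu>1$ precisely when $p^{2}-2p-1>p\delta/2$, which is exactly the condition motivating the initial choice of $\delta$. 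This gives $\int_{0}^{T}c(t)\,dt\lesssim 1$ and completes the proof of \eqref{diff2}.
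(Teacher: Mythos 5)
Your argument is essentially the paper's own proof: pointwise bounds $|F(u)|\lesssim|u|^p$, $|R_jF(u)|\lesssim|u|^{p-1}|R_ju|$, the Sobolev embedding $W^{1,\kappa}(S^2)\hookrightarrow L^\infty(S^2)$ (hence $L^{2p}$) for $\kappa>2$ after commuting the radial weight $w$ with $R_j$, radial H\"older with exponents $\sigma/(2p)$ and $1/(1-p\theta)$, and the reduction to the weighted integral $c(t)=\bigl(\int_0^{1+t}\Psi^{\lambda}r^2\,dr\bigr)^{1/\lambda}$, which is exactly the paper's $J(t)^{1-p\theta}$; the final criterion $4+(2/p)-2p+\delta<0$, i.e.\ $p^2-2p-1>p\delta/2$, is also the paper's.

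One step is misstated, though the conclusion survives. In the outer region you invoke $\int_1^{2+t}\tau_-^{-a}\,d\tau_-\lesssim 1$ ``for $a>1$'', with $a=\frac{1-\delta+p\theta}{1-p\theta}$; but the standing choice $2p\theta<\delta$ forces $a<1$, so this integral is not bounded: it grows like $(2+t)^{1-a}=(2+t)^{(\delta-2p\theta)/(1-p\theta)}$. The correct bookkeeping (as in the paper) keeps this factor; it turns out the inner region $r\le(1+t)/2$, where $\tau_-\sim 2+t$, already produces the exponent $3-\frac{p(1-3\theta)}{1-p\theta}-a$, and the outer region with the divergent $\tau_-$-integral gives the same exponent, so one still gets $c(t)\lesssim(2+t)^{3+(2/p)-2p+\delta}$ and hence $\int_0^T c(t)\,dt\lesssim1$ under $p^2-2p-1>p\delta/2$. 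So replace the ``$a>1$'' justification by the explicit evaluation $\int_1^{1+t/2}\rho^{-a}\,d\rho\lesssim(2+t)^{1-a}$ and track the extra power; everything else is fine.
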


\begin{proof}
Since $\kappa>2$, we have  
\begin{Eq*}
\|\phi\|_{L^\infty(S^2)} \lesssim
\|\phi\|_{W^{1,\kappa}(S^2)}
\sim \| \phi\|_{L^\kappa(S^2)}
+ \sum_{j=1}^3
\|R_j  \phi\|_{L^\kappa(S^2)},
\end{Eq*}
by the Sobolev inequality.
Therefore, we can deduce
\begin{Eq*}
\| F(u(t,r))\|_{L^2(S^2)}
+ \sum_{j=1}^3
\|R_j F(u(t,r)) \|_{L^2(S^2)} \lesssim
\|u(t,r)\|_{W^{1,\kappa}(S^2)}^{p}.
\end{Eq*}
Thus, noting that $(\sigma/2p)^{-1}+(1/(1-p\theta))^{-1}=1$, we get
\begin{Eq} \label{est1}
& \| \tau_+^{s/2} \tau_-^{(1+\delta)/2} F(u(t))\|_{L^2({\mathbb R}^3)}
+ \sum_{j=1}^3 \|\tau_+^{s/2} \tau_-^{(1+\delta)/2} R_j F(u(t))\|_{L^2({\mathbb R}^3)}
\\
&  \lesssim
\|w(t) u(t)\|_{L_r^\sigma W^{1,\kappa}(S^2) }^{p}
  (J(t))^{(1-p\theta)/2},
\end{Eq}
where we set
\begin{Eq*}
  J(t)
:= & \int_0^{t+1}
( \tau_+^{s} \tau_-^{1+\delta} (w(t,r))^{-2p})^{\frac{1}{1-p\theta}} \, r^2dr
 \\
 = & \int_0^{t+1}
    r^{2-\frac{p(1-3\theta)}{1-p\theta} }
    \tau_+^{\frac{s-p(1-\theta)}{1-p\theta}}
  \tau_-^{\frac{1+\delta-p(s-1+\theta)}{1-p\theta}}
   dr.
\end{Eq*}
Since
\begin{Eq*}
& s-p(1-\theta)=1+(2/p)-p+p\theta
=(-(p+1)(p-2)+p^2\theta)/p<0,
\\
& 1+\delta-p(s-1+\theta)=-1+\delta-p\theta,
\end{Eq*}
we have
\begin{Eq*}
 J(t)
\lesssim    (2+t)^{ \frac{1+(2/p)-p+p\theta}{1-p\theta}} \int_0^{t+1}
  r^{2-\frac{p(1-3\theta)}{1-p\theta} }
 (2+t-r)^{-\frac{1-\delta+p\theta}{1-p\theta}} dr.
\end{Eq*}
Noting that
\begin{Eq*}
3-\frac{p(1-3\theta)}{1-p\theta}>0 \ \  \mbox{if}\  p<3
\ \ \mbox{and}\ \
1-\frac{1-\delta+p\theta}{1-p\theta}>0 \ \  \mbox{if}\ 2p\theta<\delta,
\end{Eq*}
we see that the above $r$-integral is bounded by
\begin{Eq*}
 &  (2+t)^{-\frac{1-\delta+p\theta}{1-p\theta}}
\int_0^{(t/2)+1} r^{2-\frac{p(1-3\theta)}{1-p\theta} }dr
\\
& \quad
+(2+t)^{2-\frac{p(1-3\theta)}{1-p\theta} }
\int_{(t/2)+1}^{t+1}
 (2+t-r)^{-\frac{1-\delta+p\theta}{1-p\theta}} dr
\lesssim
(2+t)^{\frac{2+\delta-p\theta-p}{1-p\theta}}.
\end{Eq*}
Thus we get $
  (J(t))^{1-p\theta}  \lesssim (2+t)^{3+(2/p)-2p+\delta}.
$
Now, it follows from \eqref{est1} that
\begin{Eq*}
& \|\tau_+^{s/2} \tau_-^{(1+\delta)/2}F(u)\|_{L^2_T L^2({\mathbb R}^3)}
+ \sum_{j=1}^3
\|\tau_+^{s/2} \tau_-^{(1+\delta)/2}
R_j F(u)\|_{L^2_T L^2({\mathbb R}^3)}
\\ 
& \lesssim  \|\hspace{-0.5mm} | v \|\hspace{-0.5mm} |_{X_T}^p
\left( \int_0^T  (2+t)^{3+(2/p)-2p+\delta} dt \right)^{1/2}
 \lesssim  \|\hspace{-0.5mm} | v \|\hspace{-0.5mm} |_{X_T}^p,
\end{Eq*}
because $4+(2/p)-2p+\delta<0$ by the choice of $\delta$ under the assumption
$p > 1+\sqrt{2}$.
Hence \eqref{diff2} holds.
This completes the proof.
\end{proof}

\noindent
{\bf End of the proof of Theorem \ref{main}}:\
By the blow-up criterion given in Theorem \ref{local},
we only need to derive a uniform bound of $\|u(t)\|_{L^{2p}({\mathbb R}^3)}$
with respect to ${t \in [0,T_*)}$.

Let $0<T<T_*$.
Seeing the proof of \eqref{diff2}, we find
\begin{Eq*}
& \|u(t)\|_{L^{2p}({\mathbb R}^3)}
\le \| \tau_+^{s/2} \tau_-^{(1+\delta)/2} |u(t)|^p \|_{L^2({\mathbb R}^3)}^{1/p}
\\
&  \lesssim
\|w(t) u(t)\|_{L_r^\sigma W^{1,\kappa}(S^2) }
  (J(t))^{(1-p\theta)/(2p)} \lesssim
  \|\hspace{-0.5mm} | u \|\hspace{-0.5mm} |_{X_T}^p,\  {t \in [0,T]}.
\end{Eq*}
Therefore, it suffices to get a uniform bound of
$ \|\hspace{-0.5mm} | u \|\hspace{-0.5mm} |_{X_T}$
with respect to ${T \in [0,T_*)}$.
Since $u$ satisfies \eqref{linear}
with $F=F(u)$, applying the rotational vector fields $R_j$ to
the equation, and using
\eqref{BE} and \eqref{diff2}, we obtain
\begin{Eq*}
\|w(t) u(t)\|_{L^\sigma_rL^\kappa(S^2)}
+\sum_{j=1}^3
\|w(t) R_j u(t)\|_{L^\sigma_rL^\kappa(S^2)}
\le C_0\eta+C_1 \|\hspace{-0.5mm} | u \|\hspace{-0.5mm} |_{X_T}^p
\end{Eq*}
for $t \in [0,T)$, because
$\|R_j g\|_{L^2({\mathbb R}^3)}
+\|\nabla R_j f\|_{L^2({\mathbb R}^3)} \lesssim \eta$
by the assumption that the supports of $f$ and $g$ are compactly supported.
Thus we get
\begin{Eq*}
\|\hspace{-0.5mm} | v \|\hspace{-0.5mm} |_{X_T}
\le C_0\eta +C_1
\|\hspace{-0.5mm} | u \|\hspace{-0.5mm} |_{X_T}^p
\end{Eq*}
for any ${T \in [0,T_*)}$.
This means that there exists $\eta_0>0$ such that
if $0<\eta<\eta_0$, then
$\|\hspace{-0.5mm} | u \|\hspace{-0.5mm} |_{X_T}\le 2C_0\eta$
holds for ${T \in [0,T_*)}$, because $p>1$.
Hence the proof of Theorem \ref{main} is completed.
\hspace{25mm} $\Box$


\bibliographystyle{plain}
\bibliography{preprint_STR_pot_GK_17_01_22}
\end{document}